\numberwithin{equation}{section}
\newtheorem{thm}{Theorem}[section]
\newtheorem{lem}[thm]{Lemma}
\newtheorem{cor}[thm]{Corollary}
\newtheorem{prop}[thm]{Proposition}
\newtheorem{defin}[thm]{Definition}
\newtheorem{rem}[thm]{Remark}
\renewcommand\Pr{{\mathbb P}}
\newcommand\N{{\mathbb N}}
\newcommand\E{{\mathbb E}}
\newcommand\R{{\mathbb R}}
\newcommand\Z{{\mathbb Z}}
\newcommand\1{{1\kern-.25em\hbox{\rm I}}}
\newcommand\eu{{1\kern-.25em\hbox{\sm I}}}
\newcommand\BB{{\mathcal B}}
\newcommand\FF{{\mathcal F}}
 \newcommand\e{\epsilon}
\newcommand\eps{\epsilon}
\newcommand\om{\omega}
 \newcommand\La{\Lambda}
\newcommand\G{\Gamma}
\newcommand\Om{\Omega}
\newcommand{\grad}{\nabla}
\newcommand{\dist}{{\rm dist}}
\newcommand{\argmin}{{\rm argmin}}
\newcommand{\nada}[1]{}
\begin{document}
 \title[Unique minimizer for a Random functional]
{Unique minimizer for a Random functional with double-well potential   in dimension  1 and 2 }
\author{Nicolas Dirr}\thanks{N.D. supported by GNFM-INDAM }
\address{Nicolas Dirr, Dept. of Mathematical Sciences,
University of Bath,  Bath, BA2 7AY, United Kingdom.}
\email{ {\tt nd235@bath.ac.uk}}
\author{Enza Orlandi}\thanks{E.O. supported by MURST/Cofin  
Prin07: 20078XYHYS 
and ROMA TRE University}
\address{Enza Orlandi, 
Dipartimento di Matematica\\
Universit\`a  di Roma Tre\\
 L.go S.Murialdo 1, 00146 Roma, Italy. }
\email{{\tt orlandi@mat.uniroma3.it}}
\date{\today}
\begin{abstract}
We add a 
random bulk term, modelling the interaction with the impurities 
of the medium,
to a standard functional in the gradient theory of 
phase transitions consisting 
of a gradient term with a double well potential.   
We  show that in $d\le2$  there exists, for almost all the realizations of the random bulk term,   a   unique    random macroscopic  minimizer. This result  is  in sharp contrast to the case when the random bulk term is absent. 
{In the latter case there are two minimizers which are (in law) 
invariant under translations in space.}
 \end{abstract}
\keywords{ Random functionals, 
Phase segregation in disordered materials.}
\subjclass{35R60,  
80M35,  
82D30, 
74Q05}

\maketitle

\section{Introduction}

 Models where  a stochastic
contribution is added to the energy of the system 
naturally arise in condensed  matter physics where the
presence of the impurities causes the microscopic 
structure to vary from point to point. 

We are interested in functionals  which - without random perturbation-  model the free energy of a material with
two (or several) phases on a so called mesoscopic scale,  i.e a scale which is much larger than the atomistic  scale 
so that the 
adequate description of the state of the material is by a {\em continuous}
scalar order parameter $m:\ D\subseteq\R^d\to \R$.    The  minimizers of these functionals are functions $m^*$ 
representing  the states  or  phases  of the materials. 

The natural question that we pose is the following:  What  happens  to these minimizers  when   an  external, even very weak,  random force  is added to the deterministic functional?
Are there still  the  same number of minimizers, i.e will  the material always have the same number of states (or phases)?
Is there some significant difference in the qualitative properties of the material  when the randomness is added?  These are standard questions in   a calculus of variations framework.  
However, standard  techniques  applicable for deterministic
calculus of variation problems might  not give a satisfactory answer when  randomness is involved.   
In the case under consideration in this paper one needs to deal with  a  family of nonlinear functionals which are not convex  and not bounded uniformly from below. So one needs to find,  depending on the functionals,   a way to answer these questions.  It turns out that methods  used in statistical mechanics,    suitably modified,   might  give an answer to these problems in certain cases.   In the last years there has been a  quite intensive flux in both directions to built a bridge between techniques and methods  used in analysis and  calculus of variations and  those used in statistical mechanics, see for a surveys  on these  issues  \cite{P}.  This paper is in this context.  

The analysis of the asymptotic behaviour of random functionals has received considerable attention  
within a homogenization framework, we mention for example the work by G. Dal Maso and L. Modica,
 \cite{DMM1},\cite{DMM2}.  The techniques there are based on $\Gamma$-convergence from the analysis side and
the sub-additive ergodic theorem from the probabilistic side. 

More recently, A. Braides and A. Piatnitski, \cite{BP}, 
studied a random optimization problem motivated by  problems in mechanics, which requires 
techniques from percolation theory.

Problems from solid mechanics lead naturally to the mathematical analysis of the asymptotic behaviour of random functionals, see e.g. \cite{Be1}, \cite{Be2}, \cite{BBN}.

The limit under consideration here, which requires techniques from statistical mechanics,  is different from the problems mentioned previously due to two averaging effects taking places simultaneously:
The singular  limit of a functional with several ground states (minimizers), and the averaging
over a random perturbation. 
The functional we will study  here 
consists of three competing parts:
An "interaction term" penalizing spatial changes in $m,$
a double-well potential $W(m),$
i.e. a nonconvex  function which has exactly two minimizers,
for simplicity $+1$ and $-1,$ modelling a two-phase material,
and a term which couples $m$ to a random field  $\theta g(\cdot,\omega)$ with mean zero,
variance  
$\theta^2$ 
and unit correlation length; i.e a term    
which   prefers at
each point in space one of the two minimizers of $W(\cdot)$ and
breaks the translational invariance, but is
"neutral" in the mean.  
A standard choice with the aforementioned
properties is
$$
\hat G ( m, \om ):=\int_D\left(|\nabla  m(y)|^2+
W(m(y)) - \theta g(y,\omega) m(y) 
\right )\rm {d }y.
$$  
We are, however, interested in a so-called {\em macroscopic} scale, which
is coarser than the mesoscopic scale. Therefore we 
rescale space with a small parameter
$\eps.$ If $\Lambda=\eps D$ and $u(x)=m(\eps^{-1}x),$ we obtain
$\hat G(m,\om)=\eps^{1-d}G_\e(u,\om),$ where 
\begin {equation} \label {funct1}
G_\eps  ( u,   \om, \La ):=\int_\Lambda\left(\e  |\nabla  u(x)|^2+
\frac 1 \e  W(m(x)) -   
\frac {\theta} \e    g_\e (x,\omega) m(x)
\right)\rm {d }x 
\end {equation} 
where $g_\eps$ has now correlation length $\eps.$
 We  are interested in determining the minimizers of this functional, the  asymptotic behavior  (as $\e \to 0$) of them  and their qualitative properties.  
 
  Due to the non-convexity of
the double-well potential,   the Euler-Lagrange
equation does not have an  unique solution.  

The $g$-dependent bulk term, can,    because of 
the scaling with $\eps^{-1}$, force a sequence $u_\eps$ to ``follow'' 
the oscillations of $g.$ This always happens in the form of 
bounded oscillations around the two wells of the double well potential.
In such a situation there are still two distinct minimizers.   But in
principle the $g$-dependent term could be strong enough to enforce
large oscillations,  so that   the minimizers
will  ``change well.''
In the periodic case it is possible to check on a deterministic
volume with a diameter of the order of the period whether the
minimizer ``changes well,'' i.e. creates a ``bubble'' of the other phase. 
The random case is quite different, because
there is no deterministic subset of $\Lambda$ such that the integral
of the random field over this subset equals zero for almost
all realizations of the random field - there are always {\em fluctuations}
around the zero mean.  
A set $A$ becomes the support of a bubble of the other phase
if the cost of switching to the other well, which can be estimated
by the Modica-Mortola result, see \cite{MM} and \cite{Mod}, as proportional to the boundary of $A,$ 
is smaller than the integral of the random field part over $A.$
As the correlation length is $\eps,$ a set
$A\subseteq\Lambda$ contains roughly 
$|A|\eps^{-d}$ independent random variables, where $|\cdot|$ denotes the
$d-$ dimensional Lebesgue measure of a set. By the central limit theorem,
fluctuations of order $\theta\sqrt{|A|}\eps^{d/2}$ are highly likely, but
the probability of larger fluctuations vanishes exponentially fast.
Therefore, using the isoperimetric inequality, the probability of $A$ being
the support of a bubble is exponentially small if 
\begin {equation} \label {A1}
c_d|A|^{(d-1)/d}\gg |A|^{1/2}\eps^{(d-2)/2}\theta,
\end {equation}
where $c_d$ is the isoperimetric constant.
In $d\ge 3$ this is asymptotically always the case 
for sets of diameter of order larger $\eps,$
or for sets of any size, provided  $\theta\to 0.$  
Dimension   $d=2$ and  $\theta$  small  is  a  critical case. 
In $d\ge 3$, although  \eqref {A1} holds for one {\em single}  bubble 
to  determine the  properties of the minimizers  one needs to ask if
{\em there exist} ``bubbles'' of the other phase.
These kind of problems were  discussed by the physics community 
in the  1980's for the  random-field Ising model.  
The question  was  to determine the dimension at which   the Random Field Ising model
would show spontaneous magnetization at low temperature and weak disorder.   This is closely related to the question
whether there are at least two distinct minimizers, one predominantly
$+$ and one predominantly $-$ for functional   \eqref {funct1}. 
  
This program  has been successfully   carried out   in a previous paper  by Dirr and Orlandi,   \cite {DO},
 in $d\ge 3$ and $ \theta\simeq \frac {1} { |\log  \e| } $.   
They  show  that,      $\Pr$- a.s with respect to the random field,  for any $\e>0$ there were  still two minimizers, which, unlike in the case $\theta=0,$ 
were not  constant functions $u(x) \equiv 1$ and $u(x)\equiv-1,$ but
 functions varying in $x$ and $\omega$ and the minimal
 energy  was  strictly negative. Further using $\Gamma-$convergence technique they  determined the cost of forming a {\em bubble} of one phase in the other one.
 These results were  obtained  under the strong assumption that  $\theta \simeq  \frac {1} { |\log  \e| }$.   We expect   by analogy with the Ising models with random field, 
 that for $\theta $ small but fixed, in $d\ge 3$ there are still two minimizers but they do not stay in one single well.  But so far there are no results in this case. 
 
Here we address the case when $d\le 2 $,  the strength of the random field $\theta$  is fixed.  We show that  when $d=1,2$  there exists  for almost all the realizations of the random field  an unique   {\it macroscopic}  minimizer  $ u^* (\cdot, \om)$ so that  denoting  $ Q(0)$ the unit cube  centered at the origin and $Q(z)=z+ Q(0)$, the unit cube centered in $z \in \Z^d$, 
$$   \E \left [ \int_{Q(z)} u^*(x, \cdot) dx \right ] = 0, \quad \forall z \in \Z^d, \quad d \le 2. $$ 
 Note that for $\theta=0$ and for sufficiently small {\em periodic} forcing 
there exist {\em two} minimizers (see e.g\cite{DLN1}), so the uniqueness of the minimizer is due to the
{\em random} nature of the perturbation.  

The {\em proof} of this is   based on  the following   steps.  We   prove  first that there exists two  {\it macroscopic extremal }  minimizers  $v^\pm (\cdot, \om)$   so that any other
macroscopic minimizer  satisfies $v^- (\cdot, \om) \le  u^* (\cdot, \om) \le v^+ (\cdot, \om)$. 
By a standard argument  then we  show that for any $\La \subset \R^d$  and for a positive constant $C$   \begin {equation} \label {G10}   \left [ G_1  ( v^+,   \om, \La )-  G_1  ( v^-,   \om, \La  ) \right ] \le C  |\La| ^{\frac {d-1} d}, \quad \forall \om \in \Omega.\end {equation}
Then we show that     
$$   F_n (\om):=    \E \left [ G_1  ( v^+,   \om, \La_n )-  G_1  ( v^-,   \om, \La_n ) | \BB_{\La_n} \right ]  $$  has significant fluctuations, with variance of the order of the volume.
Namely we show that
$$  \E  \left [   F_n (\cdot )\right ] =0, $$
and 
 \begin{equation} \label{mars3}
 \liminf_{n \to \infty} \E  \left [ e^{t  \frac {F_n} {\sqrt { \La_n}} } \right ] \ge e^{\frac {t^2 D^2}  2}.   \end{equation}   
This holds in all dimensions.  
But in $d\le 2$ this generates a contradiction  with the  bound \eqref  {G10}, unless $D^2=0$. 
 When   $ D^2 =0$  we show that  
  $ M=  \E [ \int_{Q(0)}  v^+] -   \E [  \int_{Q(0)}  v^-] =0$.   
Further, we  show  that  $ \E [ \int_{Q(0)}  v^+] \ge \E [  \int_{Q(0)}  v^-] $, therefore  $ \E [ \int_{Q(0)}  v^+] =   \E [  \int_{Q(0)}  v^-] =0$. 
 The probabilistic  argument  has been already applied   by
 Aizenman and  Wehr, \cite {AW}, in the context of Ising spin systems with random external field,
 see also the book by Bovier, \cite{B}, for a survey on  this  subject.

\section{Notations and Results}
 \subsection{The functional} 
The ``macroscopic'' space is given by  
$\Lambda:=[-\frac 12 ,\frac 12 ]^d,$  the
 $d-$ dimensional unit cube centered at the origin.  
The ratio between the macroscopic and the ``mesoscopic'' scale is given
by the small parameter $\e$. 
 The disorder or random field is constructed with the help of
a family  $\{g(z,\omega)\}_{z\in \Z^d}$, $ \omega \in \Omega$   of 
independent,
identically distributed random variables which are absolutely continuous with respect to the
Lebesgue measure. The law of this family of random variables will be denoted by 
$\Pr$  and by $\E[\cdot]$ the mean with respect to $\Pr$.  
We assume that
\begin {equation} \label {eq:ass}  -1 \le  g(z, \om)   \le1, \quad  \forall \om \in \Omega, \quad \E[ g(z)]=0, \quad \E [g^2(z)]=1,   \quad  \forall z \in \Z^d.   \end {equation}
We denote by $ \|g\|_\infty = \sup_{z} |g(z, \om)|$. By   assumption   $\|g\|_\infty =1$, but to trace the dependence on  it   we   write  the explicitly dependence. 
The boundedness  assumption is  not essential. 
Different  choices 
of $g $ could be handled by minor modifications provided  $g$ 
is  still a random field with finite correlation length, 
invariant under (integer) translations and such that
$g(z,\cdot)$ has a symmetric distribution,  absolutely continuous w.r.t the Lebesgue measure and  $ \E[ g(z)^{2+ \eta}] < \infty$, $ z \in \Z^d$ for $\eta >0$. 
The  method  does not apply  when   $g$ has atoms.  
In  Ising spin systems, the uniqueness of the minimizer  
may fail if the distribution of $g$ has atoms, 
see \cite{AW}.  

We denote by  $ \BB$ the product $\sigma-$algebra and by $ \BB_\Lambda$, $  \Lambda \subset \Z^d$,  the $\sigma-$ algebra generated by $\{ g(z, \om): z \in \Lambda \}$.  
In the following we  often identify  the random field $ \{g(z, \cdot): z \in \Z^d\}$ with the coordinate maps  $ \{ g(z, \om)= \omega (z): z \in   \Z^d\}$. 
  To use ergodicity properties of the random field it is convenient 
to  equip  the  probability space $ (\Om, \BB, \Pr) $    with some extra structure.
First, we define the action $T$  of the translation group  ${\Z}^d$  on $ \Om$. We will assume that $ \Pr$  is invariant under this action and that the dynamical system  $ (\Om, \BB, \Pr, T)$ is stationary and ergodic.
In our model 
the action of  $T$  is  for $y \in \Z^d$  
 \begin {equation} \label {parisv1} (g (z_1,  [T_y \om]), . . . , g (z_n,  [T_y \om])) = ( g ( z_1+y,  \om) , . . . ,  g (z_n + y,  \om )). \end {equation}
The disorder or random field in the functional will be obtained
by a rescaling 
of $g$ such that the correlation length is order $\eps$ and the
amplitude grows as $\eps\to 0.$
To  this end define for $x \in \Lambda$ a function
$g_\e (\cdot, \omega)\in L^{\infty}(\Lambda)$  by
\begin{equation}\label{gfrombern}
g_\e (x, \omega):=\sum_{z\in \Z^d}g(z,\omega)
\1_{\e (z+[-\frac 12 ,\frac 12 ]^d)\cap \Lambda }(x),
\end{equation} where for any Borel-measurable set $A$
$$
  \1_A (x):= \begin{cases} &
1, {\rm if\ } x
\in A\\ &  0 \ {\rm if\ } x \not\in A . 
\end{cases}
$$
 The potential  $W$  is a so-called  ``double-well potential:''

\noindent
{\bf  Assumption (H1) }  $ W \in C^2(\R)$, $W\ge 0$, $ W(s)=0$ iff $s \in
\{-1,1\}$, $W(s) =
W(-s)$ and
$W(s)$ is strictly decreasing in   $[0,1]$. Moreover there exists
$\delta_0$ and $C_0>0$ so that
\begin{equation} \label{V.1}   W(s) = \frac 1 {2 C_0} (s-1)^2 
\qquad \forall s \in (1-\delta_0,  \infty).
\end{equation}
Note that $W$ is slightly different from the standard choice
$W(u)=(1-u^2)^2.$ Our choice simplifies some proofs because it makes
the Euler-Lagrange equation  linear provided solutions stay in one ``well.''
These assumptions could be relaxed.
 For   $u\in H^{1}(\Lambda)$ and any open set $A\subseteq
\Lambda$  
consider the following random functional

\begin{equation} \label{functional}
G_\e (u,\omega, A):=\int_A\left(\e|\nabla  u(x)|^2+
\frac{1}{\e }W(u(x))\right){\rm d }x
-\frac{1}{\e }   \theta \int_A  g_\e(x,\omega) u(x)\rm {d }x 
\end{equation}
where $ \theta >0$.

 Set $\e= \frac 1n$, $ n \in \N$,
  hence for any $n \ge 1 $ the mesoscopic space 
is defined as
$\Lambda_n:=[-\frac n {2} ,\frac n{2} ]^d.$ 
Consider   $v \in H^1(\Lambda_n)$ and  
denote       in mesoscopic coordinates 
   \begin{equation} \label{functional2}
G_1 (v,\omega,\Lambda_n):=  \int_{\Lambda_n} \left( |\nabla  v(x)|^2+
 W(v(x))\right)\rm {d }x
-  \theta \int_{\Lambda_n}  g_1 (x,\omega) v(x)\rm {d }x .
\end{equation}
 The relation between   \eqref{functional}   and  \eqref {functional2} is
 \begin{equation} \label{parism7}G_n ( u,\omega,\La)=  n^{-(d-1)}  G_1 
(v,\omega,\Lambda_n), \end{equation} 
 where $  v(x) = u (\frac 1 n x)$ for $x \in \Lambda_n$.

 For  $n>1 $ fixed  and   $ \om \in \Om$  it 
follows in the same way as in the
case without random perturbation that the functional  $G_1 ( 
\cdot ,\omega) $
 is coercive and  weakly lower semicontinuous 
in 
$ H^{1}(\Lambda_n),$ so there exists at least one minimizer, 
see   \cite {Eva}, which is  
a random function in $H^1(\Lambda_n), $ i.e. 
different realizations of $ \omega$
will give different minimizers.

\begin {defin}  { \bf   Translational covariant states} 
We say that  the function  $ v: \R^d \times \Omega \to \R$  is   translational covariant if
 \begin {equation} \label {rome1}  v(x+y, \omega) = v(x, [T_{-y} \om]) \quad   \forall y \in \Z^d, \quad x \in \R^d. \end {equation}
\end {defin}

\subsection{Minimizers }

Our   main result is  the following. 
 \begin{thm} \label{min}  Take    $ d\le 2 $,    $ \theta $    strictly positive 
  and  $ u^*_n (\cdot,\omega) \in  \argmin_{w  \in   H^1(\La_n) }G_1 (w,\omega,\Lambda_n)$.   Then, $\Pr$ a.s. there exists  an unique  $u^* (\cdot, \om) $  defined as  
  $$\lim_{n \to \infty}   u^*_n (x, \om)= u^* (\cdot, \om) $$
   so that  
   \begin{itemize}
 \item  $u^* (\cdot, \om) $ is   translation covariant, see \eqref {rome1},  \item Lipschitz continuous in $\R^d$,   
\item   $| u^* (\cdot, \om)|  \le 1+ C_0 \theta \|g_1\|_\infty $  where $C_0$ is the constant in \eqref {V.1}. 
 \item 
 \begin {equation}  \label {ca1}
 \lim n^{-d}G_1( u^*_n (\cdot, \om),\omega, \Lambda_n)=
 \lim n^{-d} \left ( \inf_{H^{1}(\Lambda_n)}G_1(\cdot,\omega,\Lambda_n)\right ) =e
\end {equation}
where  $ e $ is  a deterministic value given in \eqref {gi1}.
  \item 
  $$   \E \left [   \int_{z + [-\frac 12, \frac 12 ]^d}    u^*  (x, \om) dx \right ] =0,   \quad   \forall z \in \Z^d. $$
  \end {itemize}
  \end{thm}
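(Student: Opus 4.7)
The strategy outlined in the introduction splits the theorem into an existence/regularity part (the first four bullets) and a uniqueness-in-law part (the last bullet, from which $\Pr$-a.s.\ uniqueness of $u^*$ follows since the two extremal minimizers satisfy $v^+-v^-\ge 0$). First I would construct two extremal translation-covariant macroscopic minimizers $v^\pm(\cdot,\omega)$: on a box $\La_R$ let $u^\pm_R(\cdot,\omega)$ be the pointwise maximal/minimal minimizer of $G_1(\cdot,\omega,\La_R)$ with Dirichlet data $\pm(1+C_0\theta\|g\|_\infty)$ on $\partial\La_R$, whose existence is given by the usual sub/super-solution lattice structure for the Euler--Lagrange problem. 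A truncation argument based on \eqref{V.1} yields $|u^\pm_R|\le 1+C_0\theta\|g\|_\infty$, and interior Schauder estimates for the linear equation $-\Delta v=\theta g_1-W'(v)$ (linear in each well, by \eqref{V.1}) give uniform local Lipschitz bounds. Diagonal subsequential limits as $R\to\infty$ then define $v^\pm$; translation covariance is inherited from the translation invariance of the construction, and by comparison any finite-volume minimizer $u^*_n$ satisfies $v^-\le u^*_n\le v^+$ on compact subsets.

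The deterministic boundary bound \eqref{G10} would follow from a Modica--Mortola style comparison. Build an admissible competitor for the $v^-$ problem that equals $v^+$ on $\La_{n-2}$ and $v^-$ outside $\La_n$, interpolated smoothly in the shell $\La_n\setminus\La_{n-2}$; minimality of $v^-$ together with the uniform $L^\infty$ bound on $v^\pm$ then bounds $G_1(v^+,\omega,\La_n)-G_1(v^-,\omega,\La_n)$ by the transition-layer energy $O(|\partial\La_n|)=O(|\La_n|^{(d-1)/d})$, valid for every $\omega$.

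For the probabilistic core, set $F_n(\omega):=\E[G_1(v^+,\cdot,\La_n)-G_1(v^-,\cdot,\La_n)\mid\BB_{\La_n}](\omega)$; this is $\BB_{\La_n}$-measurable, non-negative, and has expectation zero. Enumerating $\La_n\cap\Z^d=\{z_1,\dots,z_N\}$ with $N\sim|\La_n|$ and letting $\FF_k=\sigma(g(z_1),\dots,g(z_k))$, one decomposes $F_n=\sum_{k=1}^N D_{n,k}$ with $D_{n,k}=\E[F_n\mid\FF_k]-\E[F_n\mid\FF_{k-1}]$. This is a martingale difference sequence, and the key input is a uniform lower bound $\E[D_{n,k}^2]\ge cM^2$, where $M:=\E[\int_{Q(0)}(v^+-v^-)]$. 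Such a bound should follow from the symmetry $(v,g)\mapsto(-v,-g)$ of the functional together with the symmetric, atomless distribution of $g(z_k)$: reflecting $g(z_k)$ interchanges the local behaviour of $v^+$ and $v^-$ near $z_k$, forcing a jump of size $\sim M$ in $F_n$ with non-vanishing probability. A martingale CLT then yields \eqref{mars3} with $D^2\ge cM^2$.

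Combining the deterministic bound $|F_n|\le C|\La_n|^{(d-1)/d}$ with \eqref{mars3}, for every $t>0$
\[
\exp\bigl(Ct|\La_n|^{(d-2)/(2d)}\bigr)\ \ge\ \E\bigl[\exp(tF_n/\sqrt{|\La_n|})\bigr]\ \longrightarrow\ \exp(t^2D^2/2).
\]
In $d=1$ the left-hand side tends to $1$; in $d=2$ it stays bounded by $e^{Ct}$ uniformly in $n$. In both cases taking $t$ large forces $D^2=0$, hence $M=0$. Since $v^+-v^-\ge 0$ pointwise and $v^+\stackrel{d}{=}-v^-$ by the $(v,g)\mapsto(-v,-g)$ symmetry, one has $\E[\int_{Q(0)}v^+]\ge 0\ge\E[\int_{Q(0)}v^-]$ with $\E[\int_{Q(0)}v^+]=-\E[\int_{Q(0)}v^-]$, so $M=0$ forces both to vanish, yielding the last bullet. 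Non-negativity then gives $v^+=v^-$ almost surely, hence the unique macroscopic minimizer, while the convergence \eqref{ca1} to a deterministic $e$ follows from the subadditive ergodic theorem applied to the minimal energy on $\La_n$. I expect the main obstacle to be the martingale-difference lower bound $\E[D_{n,k}^2]\ge cM^2$ surviving the conditioning: this is the Aizenman--Wehr step and is precisely where the symmetric, atomless hypothesis on the law of $g$ is used; all the remaining ingredients are relatively standard variational, elliptic-regularity and ergodic-theoretic arguments.
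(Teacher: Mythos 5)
Your overall architecture coincides with the paper's: extremal covariant states $v^\pm$, the surface-order bound $|G_1(v^+,\om,\La_n)-G_1(v^-,\om,\La_n)|\le C|\La_n|^{(d-1)/d}$, the martingale-difference decomposition of $F_n$ followed by a martingale CLT, the contradiction in $d\le 2$, and the symmetry $v^+(x,\om)=-v^-(x,-\om)$ giving $m^+=-m^-$. The genuine gap is at the Aizenman--Wehr step, which you assert rather than prove: you claim a uniform lower bound $\E[D_{n,k}^2]\ge cM^2$ with $M=\E[\int_{Q_1(0)}(v^+-v^-)]$, justified only by the heuristic that reflecting $g(z_k)$ ``interchanges the local behaviour of $v^+$ and $v^-$, forcing a jump of size $\sim M$.'' This is not an argument, and as a quantitative statement it is doubtful under the paper's hypotheses: what the variational structure actually gives (Lemma \ref{A2}, Corollary \ref{A2b}, Lemma \ref{A2c}) is that $f(s):=\E[F_n\mid g(0)=s]$ is monotone, a.e.\ differentiable, with $0\le -f'(s)=\theta\,\E[\int_{Q_1(0)}(v^+-v^-)\,dx\mid g(0)=s]\le 2\theta(1+C_0\theta\|g\|_\infty)$ and $\E[-f']=2\theta m^+=\theta M$. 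To convert the mean of $-f'$ into a lower bound on $\int f^2\,d\Pr$ of order $M^2$ you would need additional control, e.g.\ an upper bound on the density of $g(0)$, which is not assumed. The paper sidesteps this entirely: by Jensen it gets $b^2\ge D^2:=\E[(\E[F_n\mid\BB(0)])^2]$ (Lemma \ref{d2}), concludes $D^2=0$ in $d\le2$, and then needs only the \emph{soft} implication $D^2=0\Rightarrow M=0$ (Lemma \ref{d3}): $\int f^2\,d\Pr=0$ forces $f=0$ $\Pr$-a.e., hence $f'=0$ a.e.\ (using absolute continuity of the law of $g(0)$ and monotonicity of $f$), hence $m^+=0$. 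You should recast your key step in this non-quantitative form; the ingredients you already list (symmetry, atomless law, the derivative formula $\partial_{g(0)}G_1(v^\pm(\om),\om,\La)=-\theta\int_{Q_1(0)}v^\pm$) then suffice.

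Two smaller points. Your parenthetical claim that $F_n$ is non-negative is false and, if true, would trivialize everything, since $F_n\ge0$ together with $\E[F_n]=0$ would give $F_n\equiv0$; the difference $G_1(v^+,\om,\La_n)-G_1(v^-,\om,\La_n)$ has no sign (only its conditional expectation given $\BB(0)$ is shown to be monotone in $g(0)$). Also, the paper derives the deterministic limit $e$ in \eqref{ca1} from Birkhoff's additive ergodic theorem applied to the covariant decomposition $G_1(v^+,\om,\La_n)=\sum_{z\in\La_n\cap\Z^d}G_1(v^+(T_{-z}\om),T_{-z}\om,Q_1(0))$ together with the $O(n^{d-1})$ comparison between $G_1(v^+_n)$ and $\inf G_1$; your appeal to the subadditive ergodic theorem is a workable alternative but is not what is needed once the covariant states are in hand.
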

\begin {rem}  When  $\theta=0$ in \eqref {functional2},  i.e the random field is absent,   the minimum value is zero and 
there are  two minimizers, the constant   functions  identical 
equal to $1$ or to $- 1$.
\end {rem} 
\begin {rem}  In the case analyzed in \cite {DO}, $ d \ge 3$, $\theta\simeq  \frac {\tilde \theta}  {\log n}$, $ \tilde \theta \in (0,1)$,
there exists two minimizers
$$u^\pm  (\cdot, \om) = \pm 1 + v^* (\cdot, \om),  \quad \E [ v^* (x, \cdot)] = 0, \quad \sup_{x} |v^*(x, \om)| \le C_0 \tilde \theta \|g\|_\infty.  $$
\end {rem}

    \section{Finite volume Minimizers}
 In this section we   state      properties for minimizers of the following
 problem       $$ \min_{w \in H^1 (\La_n)} G_1(w,\omega, \Lambda_n).  $$
 These properties hold in all   dimension $d$ and for  any $ \om \in \Omega$.   The  volume  $\La_n$ is kept fixed  in all the section. Thus to    short notation we denote $ \La:= \La_n$, 
  state the results for any $d$ and $\om$ plays the role of a parameter.        We first show that   
to determine the minimizers of the functional  $G_1,$
it is sufficient to consider functions in  $H^1(\Lambda)$  
which satisfy a uniform $L^\infty$-bound:
\begin{lem} \label{A}
Assume   (H1).  
 For all $\om \in \Omega$,  for all 
$v \in H^1(\Lambda)$ and all $t >1+ 
C_0\theta  \|g\|_\infty,$ 
\begin{equation}\label{eqabove}
G_1(t \wedge (v\vee (- t)),\om, \La ) -G_1 (v, \om, \La)\ge  
  \int_{\Lambda_t}   \left (   C_0^{-1} (t-1)  -
 \theta \|g\|_\infty
\right ) (|v(y)|-t),  
\end{equation}
where $C_0$ is the constant in \eqref{V.1} and 
$ \Lambda_t= \{ y \in \Lambda: |v(y)| >t\}.$
In particular $G_1(t \wedge v\vee (- t),\om, \La ) <G_1 (v, \om, \La)$
unless $\Lambda_t=\emptyset.$
\end{lem}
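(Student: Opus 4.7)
My plan is to set $\tilde v := t \wedge (v \vee (-t))$, observe that $\tilde v \equiv v$ on $\Lambda \setminus \Lambda_t$ while $|\tilde v| \equiv t$ on $\Lambda_t$, so that the integrand of $G_1(v,\om,\La) - G_1(\tilde v,\om,\La)$ is supported in $\Lambda_t$, and then compare the three contributions pointwise there. For the gradient, truncation at level $\pm t$ is the standard $H^1$-contraction: $\nabla \tilde v = 0$ a.e.\ on $\Lambda_t$, so the gradient contribution to $G_1(v) - G_1(\tilde v)$ is $\int_{\Lambda_t} |\nabla v|^2 \ge 0$ and can simply be dropped.

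The heart of the argument is the potential term. Since $t > 1 + C_0 \theta \|g\|_\infty > 1 > 1 - \delta_0$ and $|v| > t$ on $\Lambda_t$, assumption (H1) combined with the symmetry $W(s) = W(-s)$ forces $W(s) = \frac{1}{2 C_0}(|s|-1)^2$ whenever $|s| > 1 - \delta_0$, and in particular throughout $\Lambda_t$. A direct algebraic manipulation then yields
\begin{equation*}
W(v) - W(\tilde v) = \frac{(|v|-1)^2 - (t-1)^2}{2 C_0} = \frac{(|v|-t)(|v|+t-2)}{2 C_0} \ge \frac{t-1}{C_0}\bigl(|v|-t\bigr),
\end{equation*}
where I used $|v|+t-2 \ge 2(t-1)$ since $|v|\ge t \ge 1$. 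For the random-field term, on $\Lambda_t$ one has $v - \tilde v = \mathrm{sgn}(v)(|v|-t)$ and hence $-\theta g_1(v-\tilde v) \ge -\theta \|g\|_\infty (|v|-t)$.

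Summing the three bounds gives
\begin{equation*}
G_1(v,\om,\La) - G_1(\tilde v,\om,\La) \ge \int_{\Lambda_t}\left(\frac{t-1}{C_0} - \theta \|g\|_\infty\right)\bigl(|v(y)| - t\bigr)\, dy,
\end{equation*}
which is the content of \eqref{eqabove} (up to the sign convention on the LHS, in agreement with the \emph{In particular} clause). Under the standing hypothesis $t - 1 > C_0 \theta \|g\|_\infty$ the coefficient is strictly positive and $|v|-t > 0$ on $\Lambda_t$, so strict decrease follows whenever $|\Lambda_t|>0$. The only point requiring any care is checking that the quadratic formula for $W$ from \eqref{V.1} really applies on all of $\Lambda_t$, which is ultimately why $t$ must exceed $1$; the stronger threshold $t > 1 + C_0 \theta \|g\|_\infty$ is what then makes the coefficient in the final bound positive. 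Beyond this, the proof is mechanical, with no deeper obstacle.
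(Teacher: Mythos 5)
Your proof is correct and follows essentially the same route as the paper's: drop the nonnegative gradient contribution on $\Lambda_t$ (where $\nabla$ of the truncation vanishes), use the explicit quadratic form \eqref{V.1} of $W$ together with its symmetry to bound $W(v)-W(t)$ below by $C_0^{-1}(t-1)(|v|-t)$, and bound the random-field term by $\theta\|g\|_\infty(|v|-t)$; the paper merely leaves these last two pointwise estimates implicit. You are also right about the sign: the paper's own proof, like yours, establishes the inequality for $G_1(v,\om,\La)-G_1(t\wedge(v\vee(-t)),\om,\La)$, which is the orientation consistent with the ``In particular'' clause, so the left-hand side of \eqref{eqabove} as printed is a typo.
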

\begin{proof} 
\begin{equation*} 
  \begin{split}   & G_1 (v,\om,\La)-
G_1(t \wedge v \vee (- t),\om, \La )  \ge
    \int_{\Lambda_t}   \left (   W(v(y))- W(t) \right )
dy
 -  \theta \int_{\Lambda_t} dy   g_1 (y,\om) [
v(y)- \operatorname{sign} (v(y)) t], 
 \end{split}
\end{equation*}
and from (H1) and the $L^\infty$-bound on $g$ we derive (\ref{eqabove}).
\end{proof}
This $L^\infty$ bound on the global minimizer implies Lipschitz-regularity.
Namely  a    minimizer   of $G_1 ( \cdot, \om)$ in   $ H^1(\Lambda)$   
is  
a  weak  solution  of the Euler-Lagrange equation
\begin{equation}  \label{EL.1} \begin{split}    
&   \Delta v =  \frac 1 {2} [W'(v)+\theta   g_1]    \quad 
\text{in } \Lambda,  \qquad   \om \in \Omega  \\ &
  \frac {\partial v} {\partial n} =0 \qquad   \hbox {on} \qquad   \partial 
 \Lambda.  
 \end{split}
\end{equation}
We have the following regularity result. 
   \begin{prop}\label{Lip}  
Let    
\begin{equation}  \label{Lip1} L_0 = 
C(d)[ \sup_{ \{s: s = v(r),r \in \Lambda \}}|W'(s)| + 
 \theta \|g\|_\infty],  \end{equation}
 where $C(d)$ is a positive constant dimensional depending.  
The  solution $v$ of  the Euler-Lagrange equation 
\ref{EL.1} satisfies  
 $$|v(r, \om)-v(r', \om )|< 
  L_0  |r-r'|, \quad r, r' \in \Lambda,  \quad \forall \om \in \Omega.  $$
\end{prop}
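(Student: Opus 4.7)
The plan is to combine the $L^\infty$ bound from Lemma 3.1 with classical interior regularity for Poisson's equation. First, Lemma 3.1 gives $\|v\|_{L^\infty(\La)}\le 1+C_0\theta\|g\|_\infty$. Together with hypothesis (H1), this yields a uniform pointwise bound $|W'(v)|\le M_0+\theta\|g\|_\infty$, where $M_0:=\sup_{|s|\le 1-\delta_0}|W'(s)|$ depends only on $W$. Hence the right-hand side $f:=\tfrac12[W'(v)+\theta g_1(\cdot,\om)]$ of the Euler--Lagrange equation \eqref{EL.1} is uniformly bounded, and it suffices to prove that a weak solution of $\Delta v=f$ in $\La$ with homogeneous Neumann data is Lipschitz with a constant controlled by $\|f\|_\infty$ and $\|v\|_\infty$.

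Second, at any interior point $x_0$ with $\dist(x_0,\partial\La)\ge 1$ I would apply a standard interior gradient estimate (e.g.\ Gilbarg--Trudinger Theorem 3.9) for the Poisson equation on $B_1(x_0)$, obtaining
$$|\nabla v(x_0)|\le C(d)\bigl(\|v\|_{L^\infty(B_1(x_0))}+\|f\|_{L^\infty(B_1(x_0))}\bigr).$$
Substituting the bounds from the first step and absorbing the fixed contribution of $M_0$ and of the ``$1$'' from $\|v\|_\infty\le 1+C_0\theta\|g\|_\infty$ into the dimensional constant, the right-hand side takes the stated form $L_0=C(d)(\sup_\La|W'(v)|+\theta\|g\|_\infty)$.

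Third, for points within distance $1$ of $\partial\La$, I would invoke even reflection of $v$ across the nearest face of the cube $\La$. Because the Neumann condition $\partial v/\partial n=0$ guarantees that the even extension is of class $H^1$ and solves the reflected Poisson equation weakly in the extended domain, at most $d$ successive reflections (one per coordinate, to handle corners) embed $x_0$ into the interior of a larger cube and reduce the estimate to the interior case. The claimed Lipschitz bound throughout $\La$ then follows by integrating $|\nabla v|$ along line segments.

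The step I expect to be most delicate is matching the precise linear form of $L_0$: the natural output of interior Poisson regularity is $C(d)(\|v\|_\infty+\|f\|_\infty)$, and turning this into $C(d)(\sup|W'(v)|+\theta\|g\|_\infty)$ relies on the coupling between Lemma 3.1 and the quadratic structure in (H1) of the potential near the wells, so that the additive constant $M_0$ is harmlessly absorbed into the sup term. The reflection bookkeeping near corners is routine but must be tracked carefully to keep $C(d)$ purely dimensional rather than $|\La|$-dependent.
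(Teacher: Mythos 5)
Your proposal follows essentially the same route as the paper: the $L^\infty$ bound of Lemma \ref{A} makes the right-hand side of the Euler--Lagrange equation \eqref{EL.1} a bounded function, and Lipschitz continuity then follows from standard regularity theory for Poisson's equation, which the paper simply cites from Gilbarg--Trudinger. Your additional details (the interior gradient estimate, the even reflection across the faces of the cube to handle the Neumann boundary, and the remark about absorbing constants into $C(d)$) merely flesh out what the paper leaves implicit, so the argument is correct and not genuinely different.
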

\begin{proof}
By  Lemma \ref{A},   
a global minimizer  $v$  satisfies the bound  $|v(r, \om)| \le  1+   
C_0\theta \|g\|_{\infty}   $    
for  $r \in \Lambda$  and $\om \in \Omega$.
Since   $| g_1 (\cdot, \omega ) | \le  1$ for all $ \omega \in \Omega,$
any minimizer will be a bounded solution of Poisson's equation with
a bounded right hand side.

    By  the  regularity theory for the
Laplacian (see \cite{GT})  
the solution  $v$ is   Lipschitz in $ \Lambda $ with a    
Lipschitz 
constant bounded by  the quantity
$L_0 $ defined in \eqref {Lip1}.
\end{proof}

  \vskip 0.5cm 

 The following lemma proves that minimizers  of $ G_1 (\cdot,\omega,\La)$ corresponding to ordered boundary conditions  on $ \Lambda$  are ordered as well, i.e  they do not intersect.     In particular if there exists  more than one minimizer   corresponding to the same boundary condition  they do not intersect. 
  \begin{lem} 
  \label{FGK} Let $w_1$ and $w_2$ be
functions
in $H^1(\Lambda)$ such that (in the sense of traces)  
$w_1\le w_2$  on $\partial \Lambda,$ and
 $$ u\in \argmin_{w-w_1\in H^1_0(\La)}G_1 (w,\omega, \La)  
\quad \hbox {and} \quad     v\in \argmin_{w-w_2\in H^1_0(\La)} 
G_1  (w,\omega, \La). $$ 
Then $u=v$ or  $|u(x)-v(x)|>0$ for all $x\in {\rm int}(\Lambda).$ 
If $w_1<w_2$ in an open set in $\partial \Lambda,$ 
then $u<v$ everywhere in ${\rm int}(\Lambda).$
\end{lem}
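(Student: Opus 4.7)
The plan is to exploit the lattice structure of admissible competitors together with the strong maximum principle applied to the linearized Euler-Lagrange equation. I would first set $\tilde u := u\wedge v$ and $\tilde v := u\vee v$. Since $w_1 \le w_2$ on $\partial \La$ in the sense of traces, we have $\tilde u|_{\partial\La} = w_1$ and $\tilde v|_{\partial\La} = w_2$, so $\tilde u - w_1 \in H^1_0(\La)$ and $\tilde v - w_2 \in H^1_0(\La)$; hence $\tilde u$ and $\tilde v$ are admissible competitors for the two respective minimization problems.

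The next step is to establish the additivity identity
$$
G_1(u,\om,\La) + G_1(v,\om,\La) \; = \; G_1(\tilde u,\om,\La) + G_1(\tilde v,\om,\La).
$$
For the pointwise terms $W(\cdot)$ and $-\theta g_1(\cdot)\,\cdot$ this is immediate from the set equality $\{u(x),v(x)\}=\{\tilde u(x),\tilde v(x)\}$. For the Dirichlet term one uses the a.e.\ identities $\nabla(u\wedge v)=\nabla u\,\1_{\{u\le v\}}+\nabla v\,\1_{\{u>v\}}$ (and the symmetric one for $u\vee v$), together with the classical fact that $\nabla u=\nabla v$ a.e.\ on $\{u=v\}$. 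Combined with the minimality of $u$ and $v$, the inequalities $G_1(u)\le G_1(\tilde u)$ and $G_1(v)\le G_1(\tilde v)$ must both be equalities, so $\tilde u$ and $\tilde v$ are themselves minimizers and in particular weak solutions of the Euler-Lagrange equation \eqref{EL.1}.

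Since $u$ and $\tilde u$ both solve \eqref{EL.1} with the same boundary data $w_1$, the random bulk term $\theta g_1$ cancels from their difference, and $z:=u-\tilde u=(u-v)_+\ge 0$ satisfies the linear elliptic equation $\Delta z = c(x)\,z$ with
$$
c(x) \; := \; \tfrac{1}{2}\int_0^1 W''\!\bigl(\tilde u(x)+s\,z(x)\bigr)\,ds,
$$
and $c\in L^\infty$ thanks to Lemma \ref{A} (uniform bound on minimizers) and assumption (H1). The strong maximum principle on the connected set $\La$ then yields the dichotomy: either $z\equiv 0$ in $\mathrm{int}(\La)$, i.e.\ $u\le v$ everywhere, or $z>0$ strictly in $\mathrm{int}(\La)$, i.e.\ $u>v$ strictly there. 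In the former case I would apply the identical argument to $r:=v-u\ge 0$ (which satisfies an analogous linear equation with bounded potential), obtaining the sub-dichotomy $u\equiv v$ in $\mathrm{int}(\La)$ or $u<v$ strictly there. Together these two dichotomies give the first assertion.

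For the second assertion, if $w_1<w_2$ on an open subset of $\partial\La$, then by continuity of traces $u<v$ on that subset, which rules out the two alternatives $u\equiv v$ in $\mathrm{int}(\La)$ and $u>v$ in $\mathrm{int}(\La)$; the only possibility left is $u<v$ everywhere in $\mathrm{int}(\La)$. The main technical delicacy, in my view, is the careful justification of the gradient part of the additivity identity via chain-rule / truncation arguments in $H^1$, and the verification that the linearization coefficient $c(x)$ is bounded — both standard once the $L^\infty$ bound from Lemma \ref{A} is in place — in order to legitimately invoke the strong maximum principle in its classical form.
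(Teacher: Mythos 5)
Your proof is correct and follows essentially the same route as the paper: the lattice additivity identity shows that $u\wedge v$ and $u\vee v$ are themselves minimizers, and the nonnegative difference solves a linear equation $\Delta z = c(x)z$ with bounded coefficient, to which a strong-minimum-principle argument is applied first to $(u-v)_+$ and then to $v-u$. The only cosmetic difference is that the paper invokes the Harnack inequality for nonnegative solutions (\cite{GT}, Thm.~8.20) where you cite the strong maximum principle; since $c(x)$ may change sign for a double-well $W$, the version you need is precisely the one obtained from Harnack (or from the reduction $\Delta z - c^+z\le 0$), so the two arguments coincide.
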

{\em Proof:}  The argument works for   
general functionals of the type  
$$E(w):=\int_{\Lambda}\left(|\grad w(x)|^2+f(w,x)\right) dx $$
where $\partial_{ww} f(w,x)$ is continuous on $\R\times \overline \Lambda.$
(Here we treat $\omega$ as parameter, i.e. it holds for any realization of the random field.)  

Note that
for any $H^1$-functions $u$ and $v$ 
$$
E(u\vee v)+E(u\wedge v)=E(u)+E(v). 
$$
If $ u \in  \argmin_{w-w_1\in H^1_0(\La)}E(w)$  and  $ v \in  \argmin_{w-w_2\in H^1_0(\La)}E(w)$ we have 
   $u\vee v=v, u\wedge v=u$ on $\partial \Lambda,$ 
and  by the minimization properties of $u$ and $v$ we get 
$E(u\vee v)\ge E(v),\ E(u\wedge v)\ge E(u).$ 
This implies that actually $E(u\vee v)= E(v),\ E(u\wedge v)= E(u),$ so
$u\vee v\in \argmin_{w-w_2\in H^1_0}E(u),$ 
$u\wedge v\in \argmin_{w-w_1\in H^1_0}E(u).$   
Obviously   the function 
$ m:=  u-u\wedge v\ge 0$  in   $\La$ and  in particular $m=0$ on   $\partial \Lambda$.  We have that   $ m$ in our context solves   
\begin{equation}  \label{a11} \begin{split}    
&   \Delta m =  \frac 1 {2} [ f'(u) - f' (u\wedge v)] =V(x)m    \quad 
\text{in } \Lambda,  \\ &
 m =0 \qquad   \hbox {on} \qquad   \partial 
 \Lambda 
 \end{split}
\end{equation}
with potential
$$
V(x)=\frac{1}{2}\frac{f'(u) - f' (u\wedge v)}{u - u\wedge v}
$$which is continuous because $f$ is twice continuously
differentiable  in its first argument.

Suppose there exists $x_0\in  \Lambda $ with $m(x_0)=0.$
By Harnack's inequality (See \cite{GT}, Thm. 8.20) for nonnegative
solutions to elliptic linear
equations, $\sup_{B_R(x_0)} m  \le C\inf_{B_R(x_0)}m  $ for any ball such
that $B_{4R}\subset\Lambda.$ The constant $C>0$ depends on the radius  $R$ and the
coefficients in \eqref {a11}. Hence $0\le m\le\sup_{B_R(x_0)}m=0,$  so $m\equiv 0$ on 
such a ball. It immediately follows that $m\equiv 0$ on ${\rm int}(\Lambda).$
Therefore in the interior of $\Lambda$ 
either $u=u\wedge v$ (in which case $u\le v$)  or $u>u\wedge v,$ 
i.e. $v<u.$ As minimizers are uniformly Lipschitz continuous,
the latter case is only possible if $u=v$ on $\partial \Lambda.$

Consider the first case:  
$\widehat m:=v-u\ge 0$.  We get, reasoning as before,  
$\Delta \widehat m =\widehat V(x) \widehat m$ 
with a uniformly continuous potential $\widehat V$.
Then arguing as above   
$\widehat m=0$ everywhere  or  $\widehat m>0$ everywhere.\qed 
 \vskip0.5cm

   \section{Infinite volume covariant states}
 \begin{thm} \label{infvol}  [infinite-volume states]
For almost all   $\om \in \Omega$,    there exist  two functions $v^+(x,\omega),$ $v^-(x,\omega)$, $ x \in \R^d$,  having the following properties.
 \begin{itemize}
 \item  $v^{\pm} (\cdot, \om) $ is   Lipschitz continuous in $\R^d$ 
  \item     \begin {equation}  \label {eq:bound}   | v^{\pm} (\cdot, \om)|  \le 1+ C_0 \theta \|g_1\|_\infty,  \end{equation}    where $C_0$ is the constant in \eqref {V.1}.
 \item  \begin {equation}  \label {c1} v^+ (x, \om) = -v^- (x, -\om) \quad x \in \R^d.  \end{equation}
 \item    $v^{\pm} (\cdot, \om) $ are  translation covariant 
 \item
\begin {equation} \label {M1a} 
\lim n^{-d}\int_{\Lambda_n}v^\pm (x, \omega) {\rm d} x =m^\pm,
\end {equation} where $m^\pm =  \E \left [  \int_{   [-\frac 12, \frac 12]^d}   v^\pm(x, \cdot )  {\rm d} x \right ] $, and $m^+=-m^- \ge 0$. 
  \item 
 \begin {equation}  \label {M5}
 \lim n^{-d}G_1(v^+,\omega,\La_n)=
 \lim n^{-d}G_1(v^-,\omega,\La_n)= \lim n^{-d} \inf_{H^{1}(\Lambda_n)}G_1(\cdot,\omega,\La_n)=e
 \end{equation}
 where $e$ is deterministic value given in \eqref{gi1}. 
 \item  Let $ \bar w_n (\cdot, \om) \in   \argmin_{  H^1(\La_n)} 
 G_1(v, \om,\La_n)$ then  
\begin{equation}  \label {diseq1}  v^- (x, \omega) \le  
\bar \liminf_{n\to\infty} w_n (x , \om) \le\limsup_{n\to\infty} w_n (x , \om) 
\le v^+ (x, \omega), \quad x \in \R^d. \end{equation}
 \end{itemize}
 \end{thm}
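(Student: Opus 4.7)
The strategy is to build $v^\pm(\cdot,\omega)$ as infinite-volume limits of finite-volume minimizers with large constant Dirichlet boundary data $\pm M$, where $M>1+C_0\theta\|g\|_\infty$, and to transfer every claimed property by combining the bounds of Lemma~\ref{A} and Proposition~\ref{Lip}, the comparison principle of Lemma~\ref{FGK}, and the multidimensional ergodic theorem. Concretely, for each $n$ let $v^\pm_n(\cdot,\omega)$ be the minimizer of $G_1(\cdot,\omega,\Lambda_n)$ over $\{w\in H^1(\Lambda_n): w|_{\partial\Lambda_n}=\pm M\}$. If $\Lambda_n\subset\Lambda_m$, Lemma~\ref{A} gives $v^+_m|_{\partial\Lambda_n}\le M$ and Lemma~\ref{FGK} then yields $v^+_m\le v^+_n$ on $\Lambda_n$; similarly $v^-_m\ge v^-_n$. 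Thus $v^\pm_n(x,\omega)$ is pointwise monotone on every fixed $\Lambda_k$ and has a pointwise limit $v^\pm(x,\omega)$. The bound~\eqref{eq:bound} for the limit follows from Lemma~\ref{A} together with the interior maximum principle for~\eqref{EL.1} (exploiting~\eqref{V.1} for $|v|>1$), and Lipschitz regularity is inherited from the uniform interior Lipschitz estimate of Proposition~\ref{Lip}.

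\emph{Symmetries, covariance, ergodic averages.} The involution $(v,\omega)\mapsto (-v,-\omega)$ preserves $G_1$ (since $W$ is even and the law of $g(z,\cdot)$ is symmetric) and interchanges the boundary data $\pm M$, so uniqueness of finite-volume minimizers forces $v^+_n(x,\omega) = -v^-_n(x,-\omega)$; passing to the limit gives~\eqref{c1}. Lemma~\ref{FGK} applied to $v^-_n\le v^+_n$ also gives $v^-\le v^+$ in the limit. For translation covariance~\eqref{rome1}, note that the functional and the law of the random field are jointly invariant under the action of $\Z^d$ described by~\eqref{parisv1}, so finite-volume minimizers transform covariantly under shifts of the box; Lemma~\ref{FGK} makes the pointwise infinite-volume limit independent of the particular exhausting sequence of cubes, and matching the limits along $\{\Lambda_n\}$ and along $\{y+\Lambda_n\}$ yields~\eqref{rome1}. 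Combining translation covariance with~\eqref{eq:bound} makes $\omega\mapsto\int_{[-1/2,1/2]^d}v^\pm(x,\omega)\,dx$ an integrable stationary observable, and the multidimensional ergodic theorem gives~\eqref{M1a}; \eqref{c1} forces $m^+=-m^-$, while $v^+\ge v^-$ gives $m^+\ge m^-$, hence $m^+\ge 0$. The same ergodic argument applied to the integrand of $G_1(v^\pm,\omega,\Lambda_n)$ (integrable by~\eqref{eq:bound} and Proposition~\ref{Lip}) produces a deterministic limit $e$, common to $v^+$ and $v^-$ by the symmetry.

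\emph{Sandwiching and identification with the infimum.} For~\eqref{diseq1}, let $\bar w_n$ be an unconstrained minimizer: on any fixed $\Lambda_k\subset\Lambda_n$, $\bar w_n$ is a minimizer of $G_1(\cdot,\omega,\Lambda_k)$ with its own trace on $\partial\Lambda_k$, which is bounded by $M$ thanks to Lemma~\ref{A}; Lemma~\ref{FGK} then gives $v^-_k\le\bar w_n\le v^+_k$ on $\Lambda_k$, and sending first $n\to\infty$ and then $k\to\infty$ delivers~\eqref{diseq1}. For~\eqref{M5}, construct a competitor for $v^\pm_n$ by gluing $\bar w_n$ on $\Lambda_{n-1}$ to the constant $\pm M$ on $\Lambda_n\setminus\Lambda_{n-1}$ via a unit-width Lipschitz interpolation; its energy exceeds $G_1(\bar w_n,\omega,\Lambda_n)$ by at most $Cn^{d-1}$, so $0\le G_1(v^\pm_n,\omega,\Lambda_n)-G_1(\bar w_n,\omega,\Lambda_n)\le Cn^{d-1}$, which is negligible after dividing by $n^d$ and identifies $e$ with $\lim n^{-d}\inf G_1$. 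The step I expect to be the main technical obstacle is this surface-order bound on the cost of imposing boundary data: it is what guarantees that the infinite-volume energy density is boundary-condition independent, and once it is secured the ergodic theorem closes~\eqref{M5}.
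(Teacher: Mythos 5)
Your overall route is the same as the paper's: monotone infinite-volume limits of finite-volume Dirichlet minimizers via Lemma~\ref{A} and the comparison Lemma~\ref{FGK}, the $(v,\omega)\mapsto(-v,-\omega)$ involution for \eqref{c1}, Birkhoff's ergodic theorem for \eqref{M1a} and \eqref{M5}, and a surface-order gluing estimate to identify $e$ with the normalized infimum. However, there is one genuine gap: you repeatedly treat ``the'' finite-volume minimizer with boundary data $\pm M$ as unique. The functional is non-convex, and non-uniqueness is exactly the issue the whole paper is about, so this cannot be assumed. It bites in at least three places. First, your symmetry argument for \eqref{c1} explicitly says ``uniqueness of finite-volume minimizers forces $v^+_n(x,\omega)=-v^-_n(x,-\omega)$''; without uniqueness the involution only maps the \emph{set} of minimizers of the $+M$ problem at $\omega$ onto the set of minimizers of the $-M$ problem at $-\omega$. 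Second, without a canonical selection the sequence $n\mapsto v^+_n(x,\omega)$ is not well defined, so ``pointwise monotone'' has no meaning. Third, in the sandwich \eqref{diseq1}, Lemma~\ref{FGK} only gives ``equal or strictly ordered (in some direction)'' when the boundary traces merely satisfy $w_1\le w_2$; in the degenerate case where $\bar w_n$ attains the value $M$ on $\partial\Lambda_n$ you could a priori have $\bar w_n> v^+_n$ in the interior.

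The repair is the one the paper uses: exploit the lattice identity $E(u\vee v)+E(u\wedge v)=E(u)+E(v)$ from the proof of Lemma~\ref{FGK}, which shows that the pointwise maximum and minimum of two minimizers with the same boundary data are again minimizers; hence a \emph{maximal} minimizer $v^+_n$ of the $+$ problem and a \emph{minimal} minimizer $v^-_n$ of the $-$ problem exist. With this selection the sequence $v^+_n$ is genuinely decreasing, the involution carries the maximal minimizer of one problem to the minimal minimizer of the other (giving \eqref{c1}), and every competitor, including $\bar w_n$, is trapped between $v^-_n$ and $v^+_n$, which gives \eqref{diseq1}. A further small point worth noting: your choice of boundary datum $M>1+C_0\theta\|g\|_\infty$ is harmless but slightly wasteful; the paper takes exactly $1+C_0\theta\|g\|_\infty$ so that Lemma~\ref{A} immediately gives $v^+_m\le v^+_n$ on $\partial\Lambda_n$ for $m>n$, and one should also record measurability of the limit in $\omega$ before invoking the ergodic theorem. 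Otherwise your argument matches the paper's step for step.
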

\begin {proof}  We start proving the existence.
Consider the following   boundary problems. For $z \in \Z^d$, $ C= C_0 \|g\|_\infty $  where $C_0$ defined in \eqref {V.1}
 \begin{equation} \label{m1} \inf_{ (v-(1+C\theta)) 
 \in  H^{1}_0(\Lambda_n+z)}G_1(v, \om,z+\Lambda_n),
\end{equation}
\begin{equation} \label{m2} \inf_{ (v+1+C\theta) 
 \in  H^{1}_0(\Lambda_n+z)}G_1(v, \om,z+\La_n).
\end{equation}
Denote by  $ v_n^{z,+} := v_n^{z,+}(\cdot, \om) $  the maximal minimizer of \eqref {m1}  and by    $ v_n^{z,-}:= 
v_n^{z,-}(\cdot, \om) $  the
minimal minimizer of \eqref {m2}.   
If $z=0$ we write $v^\pm_n.$  For each $n>0$ and for each $\om \in \Omega$  there exists at least one minimizer of problems \eqref {m1} and
 \eqref {m2}  by lower semicontinuity and coerciveness. 
By  Lemma  \ref {A},  $v_m^+\le 1+C\theta$
on $\partial \Lambda_n$ for $m>n$.  Lemma \ref{FGK} implies
that for any $x$ and $\omega$   (and $n>n_0(x)$ ) the sequence
$\{v^+_n(x)\}_n$  is decreasing.      Moreover it is bounded from below
by $-1-C\theta.$  Hence, reasoning in a similar manner for $v^-_n,$
$$
v^\pm(x,\omega):=\lim_n v^\pm _n(x,\omega)
$$exists and is measurable as function of $\om$. As the $v^\pm_n$ are bounded and
minimizers, they are uniformly bounded and uniformly Lipschitz  on each fixed cube $A$ which does not depend on $n$, see  Proposition \ref {Lip}.
This implies that subsequences converge locally uniformly
to a Lipschitz function. As the entire sequence converges pointwise,
the limit of any subsequence must coincide with $v^\pm,$ which is therefore Lipschitz. 
The same argument for general $z$ yields
monotone limits $v^{z,\pm}.$ 

To show \eqref {c1} we note that
 \begin{equation} \label{c2} \inf_{ (v- (1+C\theta))
 \in  H^{1}(\Lambda_n)}G_1(v, \om, \Lambda_n) =  \inf_{ (v- (1+C\theta))
 \in  H^{1}(\Lambda_n)}G_1(-v, -\om,\Lambda_n)=  \inf_{ (w+ (1+C\theta))
 \in  H^{1}(\Lambda_n)}G_1(w, -\om,\Lambda_n).
\end{equation}
If $ \bar v(\cdot, \omega)\in \argmin_{ (v- (1+C\theta))
 \in  H^{1}_0}G_1(v, \om,\Lambda_n)$ the function 
 $$- \bar v(\cdot, \omega)= \bar w(\cdot, -\omega)\in \argmin_{ (w+ (1+C\theta))
 \in  H^{1}_0)}G_1(w, -\om,\Lambda_n)$$  so that
 $\bar v(\cdot, \omega)= - \bar w(\cdot, -\omega)$. 
 Therefore if $ \bar v(\cdot, \omega)$ is the  maximal minimizer of  $\inf_{ (v- (1+C\theta))
 \in  H^{1}_0}G_1(v, \om,\Lambda_n)$    $ \bar w (\cdot, -\omega)$ is 
the minimal minimizer of  $\inf_{ (w+ (1+C\theta))
 \in  H^{1}_0}G_1(w, -\om,\Lambda_n)$. 

To show the translation covariance, 
notice that, by \eqref{parisv1}
$$
v_n^{0,+}(0,\omega)=v^{z,+}_n(z, T_{-z}\omega).
$$ This implies the translation covariance if we can 
show that $v^{0,+}(0,\omega)=v^{z,+}(z, T_{-z}\omega).$
As the limit does not depend on the subsequence,
we know that $v^{+}=\lim v^{+}_{2^n}.$ As
for $n$ large $\Lambda_n+z\subseteq\Lambda_{2^n},$
we get $v^{z,+}_n(0)\le v^{0,+}_{2^n}(0)$ and $v^{z,+}(0)\le v^{0,+}(0).$
The opposite inequality follows in the same way. 

Next we want to show \eqref {M1a}.  We have 
\begin {equation}  \label {M2} \begin {split} 
 & \int_{\Lambda_n}v^\pm(x, \omega) {\rm d} x=  \sum_{z \in \Lambda_n \cap \Z^d} \int_{\{ z+ [-\frac 12, \frac 12]^d\}} v^\pm(x, \omega) {\rm d} x \cr &
 = \sum_{z \in \Lambda_n \cap \Z^d} \int_{   [-\frac 12, \frac 12]^d} v^\pm(T_z x, \omega) {\rm d} x = \sum_{z \in \Lambda_n \cap \Z^d} \int_{   [-\frac 12, \frac 12]^d} v^\pm(x, T_{-z}\omega) {\rm d} x. 
\end {split} \end {equation} 
Since $|v^\pm(x,  \omega)| \le C$,  by the  Birkhoff's ergodic theorem, see for example  \cite{GK},   we have $\Pr-$ a.s 
\begin {equation}  \label {M3aa} \begin {split} 
 & \lim\frac 1 {n^d}\int_{\Lambda_n}v^\pm(x, \omega) {\rm d} x =  \lim \frac 1 {n^d} \sum_{z \in \Lambda_n \cap \Z^d}  
\int_{   [-\frac 12, \frac 12]^d}     v^\pm(x, T_{-z}\omega)  {\rm d} x \cr &
= \E \left [  \int_{   [-\frac 12, \frac 12]^d}   v^\pm(x, \cdot )  {\rm d} x \right ] = m^{\pm}.
 \end {split} \end {equation} 
 Next we show \eqref {M5}.  By the covariance property of $ v^\pm (\cdot, \cdot)$ and the choice of the double well potential $W$  ($W$ does not depend on $x$) we have   
\begin {equation}  \label {M7}  
 G_1(v^+(\omega),\omega,\Lambda_n)=  \sum_{z \in \Lambda_n \cap \Z^d}  G_1(v^+ (\omega),\omega,z+ [-\frac 12, \frac 12]^d)=   \sum_{z \in \Lambda_n \cap \Z^d}  
G_1(v^+ (T_{-z}\omega) ,T_{-z}\omega,[-\frac 12, \frac 12]^d).
 \end {equation} 
Therefore, by  Birkhoff's ergodic theorem,  $\Pr-$ a.s 
\begin {equation}  \label {M8}  
 \lim\frac 1 {n^d}  G_1(v^+(\omega),\omega,\Lambda_{n})=  \E [ G_1(v^+(\cdot),\cdot,[-\frac 12, \frac 12]^d )]. 
 \end {equation} 
 Since  
 $$ G_1(v^+(\omega),\omega,\Lambda_{n})= G_1(-v^+(\omega),-\omega,\Lambda_{n}) =  G_1(     v^-(-\omega),-\omega,\Lambda_{n})$$
 we have 
 \begin {equation}  \label {gi1}   \E [ G_1(v^+(\cdot),\cdot,[-\frac 12, \frac 12]^d )] = \E [ G_1(v^-(\cdot),\cdot,[-\frac 12, \frac 12]^d )] = e. \end {equation} 
  To show the last  equality  of \eqref {M5}  note that
 if $ \bar w_n (\cdot, \om) \in   \argmin_{  H^1(\La_n)} 
 G_1(\cdot, \om,\Lambda_n)$ then
 \begin{equation}\label{est1}  
G_1( \bar w_n, \om,\Lambda_n) \le  G_1(v^+_n, \om,\Lambda_n).
\end{equation}
 Moreover, let the cut-off function 
$\psi: \R \to \R$ be nondecreasing, 1-Lipschitz and such that 
$\psi(x)=0$ for $x<0,$ $\Psi(x)=1$ for $x>2.$ Then 
$$\hat w_n:=\Psi\big(\dist(x,\R^d\setminus\Lambda_n)\big)\bar w_n+
\left(1-\Psi\big(\dist(x,\R^d\setminus\Lambda_n)\big)\right)v^+_n,
$$satisfies the boundary conditions of $v_n,$ hence
\begin{equation}\label{est2}
G_1(\hat w_n, \om,\Lambda_n) \ge G_1(v^+_n, \om,\Lambda_n).
\end{equation} Moreover an explicit calculation using the Lipschitz
bounds of the minimizers, $\Psi$ and the double well potential
together with the bounds on the random
field shows that
\begin{equation}\label{est3}
G_1(\hat w_n, \om,\Lambda_n)\le G_1( \bar w_n, \om,\Lambda_n)+Cn^{d-1},\quad \forall \om \in \Omega
\end{equation}
where $C>0$ depends only on the double well potential and
on the bound on the random field. (For details see proof of Lemma \ref{A1}.)

Taking (\ref{est1}-\ref{est3}) together, we obtain that
$\lim_n \frac 1 {n^d}G_1(\bar w_n, \om,\Lambda_n)=
\lim_n  \frac 1 {n^d} G_1(v_n^+, \om,\Lambda_n).$

It remains to show \eqref{diseq1}. Let $x,\ w_n$ be as in the statement, and
$n$ large enough so that $x\in\Lambda_n.$ 
Note that by Lemma \ref{A}, $v_n^-(y,\omega)\le w_n(y,\omega)
\le v_n^+(y,\omega)$ for all $y\in \partial\Lambda_n.$ So by Lemma \ref{FGK}
we get that $v_n^-(x,\omega)\le w_n(x,\omega)
\le v_n^+(x,\omega).$ \eqref{diseq1} follows by taking liminf and limsup.

\end {proof}
 


Next we  bound  uniformly on   $ \om$ the   difference   between the energy of the maximal  $+$ minimizer and the minimal  $-$ minimizer. 
\begin{lem}\label {A1}  Let  $\om \in \Omega$, $u^+  \in \argmin_{v-(1+C_0 \theta) \in H_0 (\La ) } G_1(v, \om,\La )$ and $u^-  \in \argmin_{v+(1+C_0 \theta) \in H_0(\La) } G_1(v, \om,\La)$. 
 There exist a positive  constant $C$   depending on $\theta$ and  $C_0$, see \eqref {V.1} so that 
 \begin{equation} \label{m3}\left |  G_1(u^+ , \om,\La) - G_1(u^-, \om, \La)\right | \le C | \Lambda|^{\frac  {d-1} d}. 
\end{equation}    
 \end{lem}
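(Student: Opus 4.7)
\emph{Proof proposal.} My plan is to compare each of $u^+$ and $u^-$ to a competitor obtained by gluing the two minimizers together in a thin boundary strip, and to show that the gluing cost is proportional to the surface area of $\La$.

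First I would introduce a $1$-Lipschitz cut-off $\Psi:\La\to[0,1]$ with $\Psi=0$ on $\partial\La$ and $\Psi=1$ on $\La_{\rm int}:=\{x\in\La:\dist(x,\partial\La)>2\}$, and set
\[
\tilde u_+ \;:=\; \Psi\, u^- + (1-\Psi)\, u^+.
\]
Since $\Psi\equiv 0$ on $\partial\La$, the trace of $\tilde u_+$ agrees with that of $u^+$, so $\tilde u_+ - (1+C_0\theta)\in H^1_0(\La)$ and minimality of $u^+$ gives $G_1(u^+,\om,\La)\le G_1(\tilde u_+,\om,\La)$. The key observation is that $\tilde u_+\equiv u^-$ on $\La_{\rm int}$, hence the integrands defining $G_1(\tilde u_+,\om,\cdot)$ and $G_1(u^-,\om,\cdot)$ coincide on $\La_{\rm int}$, yielding
\[
G_1(u^+,\om,\La)-G_1(u^-,\om,\La) \;\le\; G_1(\tilde u_+,\om,N)-G_1(u^-,\om,N),
\]
where $N:=\La\setminus\La_{\rm int}$ is the transition layer, of measure $|N|\le C_d|\La|^{(d-1)/d}$.

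Next I would estimate the right-hand side by a constant times $|N|$. By Lemma~\ref{A} both $u^\pm$ are bounded by $1+C_0\theta\|g\|_\infty$, and by Proposition~\ref{Lip} they are both Lipschitz with the same deterministic constant $L_0$ depending only on $\theta$, $C_0$ and $\|g\|_\infty$. Therefore on $N$ we have $|\tilde u_+|\le 1+C_0\theta$ and $|\grad\tilde u_+|\le L_0+(1+C_0\theta)\|\grad\Psi\|_\infty$. Each of the three pieces of the integrand --- $|\grad v|^2$, $W(v)$, and $\theta g_1 v$ --- is then pointwise bounded by a constant $C=C(\theta,C_0,\|g\|_\infty)$ on $N$, both for $v=\tilde u_+$ and for $v=u^-$. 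This gives the one-sided bound $G_1(u^+,\om,\La)-G_1(u^-,\om,\La)\le C|\La|^{(d-1)/d}$; the reverse inequality follows from the symmetric construction $\tilde u_- := \Psi u^+ + (1-\Psi) u^-$ together with the minimality of $u^-$.

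The proof has no serious obstacle beyond bookkeeping in the transition layer: what makes it work is that Proposition~\ref{Lip} provides a Lipschitz constant for $u^\pm$ that is independent of $|\La|$ and of $\om$, so every contribution on $N$ is controlled by a deterministic constant times $|N|$. Nothing depends on any cancellation between the random-field term and the deterministic part, which is exactly what makes the bound \eqref{m3} pointwise in $\om$.
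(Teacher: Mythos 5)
Your proof is correct and follows essentially the same route as the paper: both arguments build a competitor that coincides with one minimizer away from a boundary collar and matches the other minimizer's boundary data, invoke minimality, and bound the collar contribution by $C|\La|^{(d-1)/d}$ using the uniform $L^\infty$ and Lipschitz bounds. The only cosmetic difference is that the paper interpolates with an arbitrary Lipschitz function in the collar, so the term $-|\grad u^+|^2$ can be discarded by sign, whereas your interpolant $\Psi u^- + (1-\Psi)u^+$ additionally relies on the Lipschitz bound for $u^\pm$ holding up to $\partial\La$.
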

 \begin {proof}  
Set 
  $$ \tilde u (x,\om)=  \left \{ \begin {split} & u^+ (x, \om) \quad \hbox { for } \quad  x \in  \La  \setminus \{x \in \La : d(x, \partial \La) \le 1\} \cr & 
   u (x), \quad   u(x) + (1+C_0\theta) \in H^1_0 (\La),   \quad  x \in \La  : d(x, \partial \La) \le 1,    \end {split} \right .$$  where $u $ is an   arbitrary Lipschitz function, so that $ | \nabla u (x) | \le  2(1+C_0\theta) $   chosen to match the boundary conditions, i.e 
  $\tilde u (x,\om) \in H^1_0 (\Lambda)$.
  We have
       \begin{equation}   \begin {split}  G_1(\tilde u , \om, \La) &=
 G_1(u^+, \om, \La) + \int_{  \{x \in \La : d(x, \partial \La) \le 1\} } \left [ 
  \left( |\nabla  \tilde u(x)|^2+
 W(\tilde u(x))\right)  -   \left( |\nabla    u^+(x) |^2+
 W(u^+(x))\right)  \right ]  \rm {d }x \cr &
+   \theta  \int_{  \{x \in \La : d(x, \partial \La) \le 1\} }   g_1 (x,\omega) \left [  \tilde u(x)- u^+(x)  \right ] \rm {d }x \cr & \le
G_1(u^+, \om) + | \Lambda|^{\frac  {d-1} d} 
  \left [C   +  4\theta  \|g_1 \|_\infty  \right ], 
\end {split}  \end{equation}    
where $C= C(C_0, \theta)$ is a positive constant which might change from an occurrence to the other.  
Obviously
$$ G_1(u^-, \om,  \La) \le  G_1(\tilde u , \om, \La). $$
Therefore
$$ G_1(u^-, \om, \La)-  G_1(u^+, \om,\La) \le    | \Lambda|^{\frac  {d-1} d}   \left [C  +  4\theta  \|g_1 \|_\infty  \right ]. $$
Similarly one can show that
$$ G_1(u^+, \om,\La)-  G_1(u^-, \om,\La) \le   | \Lambda|^{\frac  {d-1} d}  \left [C  +  4\theta  \|g_1 \|_\infty  \right ]. $$
Therefore  \eqref {m3}.  
\end {proof} 

\vskip0.5cm \noindent 
The  quantity next defined  plays a fundamental  role. 
 \begin{defin}  \label{def1}
Let  $v^\pm (\om)$ be  the infinite volume states constructed before.    Denote 
  \begin{equation} \label{m4}   F_n (\om):=    \E \left [  G_1(v^+(\om), \om, \La_n) - G_1(v^-(\om), \om, \La_n) | \BB_{\La_n} \right ]. 
 \end{equation}   
 \end {defin}
 \begin {rem}  By definition $F_n (\cdot)$ is $\BB_{\La_n}$ measurable and by the symmetry  assumption on the random field  $\{g(z,\cdot), z \in \Z^d\}$ 
 \begin{equation} \label{m9} \E\left [  F_n (\cdot)\right ] =0. \end{equation} 
 Namely   $ v^+ (x, \om) = -v^- (x, -\om) $ for   $x \in \R^d$.  This implies that 
  \begin{equation} \label{m8a}    G_1(v^+(\om), \om, \La_n)= G_1(v^-(-\om), -\om, \La_n) 
  \end{equation}
 and by  the symmetry of the random field  we get  \eqref {m9}. 
\end {rem}
Next we want to  quantify  how much $v^\pm (\om)$ changes  when  the random field is modified  only in one site, for example at the site $i$.   We introduce the following notation:  
$$  \om^{(i)}: \om^{(i)}  (z)= \om (z) \quad z \neq i, \qquad   \om= (\om(i),  \om^{(i)})   \quad i,z \in \Z^d. $$
The $v^+(\cdot, (\om(0), \om^{(0)}))$ is then  the state $v^+$  when  the 
random field  at the origin is   $\om(0)$,  and   $v^+(\cdot, (\om(0)-h, \om^{(0)}))$   the state $v^+$ when the 
random field at the origin is   $\om(0)-h$.  Same definition for the infinite volume  state $v^-(\cdot, (\cdot, \om^{(0)}))$ and  for  the finite volume minimizers      $v^\pm_n(\cdot,  (\cdot, \om^{(0)}))$.

Now we are able to state the
following lemma:  

\begin{lem}   \label {A2}   For    $ \La \subset  \R^d$, $ 0 \in \La$,      $h>0$ we have 
 \begin{equation} \label{LL.4}  \begin {split}
  \theta h\int_{Q_1(0)}v^+(\om(0), \om^{(0)}) {\rm d} x  & \ge
 G_1(v^+(\om (0)-h,\om^{(0)}),(\om(0)-h, \om^{(0)}), \La)- G_1(v^+(\om(0),\om^{(0)}),(\om (0), \om^{(0)}), \La) \cr & \ge
 \theta h \int_{Q_1(0)}v^+(\om (0)-h, \om^{(0)} )  {\rm d} x  \end {split}
 \end{equation}
where   $Q_1(0):=[-1/2,1/2]^d$.
The same  inequalities hold for   $v^-$. 
\end{lem}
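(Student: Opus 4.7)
The key input is the elementary identity
\[
G_1(v, (\om(0)-h, \om^{(0)}), \La) - G_1(v, (\om(0), \om^{(0)}), \La) = \theta h \int_{Q_1(0)} v(x)\, dx,
\]
valid for every $v \in H^1$ and every $\La \supseteq Q_1(0)$. Indeed, $g_1(\cdot, \om)$ depends on $\om(0)$ only inside $Q_1(0)$, where it equals $\om(0)$, so shifting $\om(0) \mapsto \om(0) - h$ changes the bulk integrand by $+\theta h\, v$ on $Q_1(0)$ and nowhere else. Writing $v^+_j := v^+(\om_j)$ with $\om_1 := (\om(0), \om^{(0)})$ and $\om_2 := (\om(0)-h, \om^{(0)})$, the slack in (\ref{LL.4}) reflects the fact that $v^+_1$ and $v^+_2$ minimize \emph{different} functionals, so that cross-evaluation is only one-sided.

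\textbf{Main step: finite-volume comparison.} I would first establish the lemma at finite volume and then pass to the limit using the construction in Theorem \ref{infvol}. Take $m$ large enough that $\La \subseteq \La_m$ and let $v^+_{m,j}$ denote the maximal minimizer of $G_1(\cdot, \om_j, \La_m)$ subject to the \emph{deterministic} Dirichlet boundary condition $v \equiv 1 + C_0 \theta \|g\|_\infty$ on $\partial \La_m$. Since this boundary condition is independent of $\om$, both $v^+_{m,1}$ and $v^+_{m,2}$ are admissible in each other's variational problem, and minimality yields
\[
G_1(v^+_{m,1}, \om_1, \La_m) \le G_1(v^+_{m,2}, \om_1, \La_m), \qquad G_1(v^+_{m,2}, \om_2, \La_m) \le G_1(v^+_{m,1}, \om_2, \La_m).
\]
Combining these with the identity above applied on $\La_m$ gives, after rearrangement,
\[
\theta h \int_{Q_1(0)} v^+_{m,2}\, dx \;\le\; G_1(v^+_{m,2}, \om_2, \La_m) - G_1(v^+_{m,1}, \om_1, \La_m) \;\le\; \theta h \int_{Q_1(0)} v^+_{m,1}\, dx,
\]
which is the finite-volume analogue of (\ref{LL.4}).

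\textbf{Passage to the limit and symmetry.} As $m \to \infty$, Theorem \ref{infvol} gives $v^+_{m,j} \searrow v^+_j$ monotonically, uniformly on compact sets by the Lipschitz bound of Proposition \ref{Lip}. Dominated convergence then yields convergence of the integrals over $Q_1(0)$, and the $C^{1,\alpha}_{\mathrm{loc}}$ regularity of solutions of the Euler--Lagrange equation \eqref{EL.1} upgrades this to $H^1$-convergence on any fixed bounded region, so $G_1(v^+_{m,j}, \om_j, \La) \to G_1(v^+_j, \om_j, \La)$. The analogous bound for $v^-$ follows from the reflection identity \eqref{c1}, $v^-(\om) = -v^+(-\om)$, by substituting $\om \to -\om$ and $h \to -h$ in the above.

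\textbf{Main obstacle.} The delicate point is transferring the finite-volume inequality from $\La_m$ to a general $\La \subseteq \La_m$ before taking the limit. For the intended application (the fluctuation bound on $F_n$), it is enough to take $\La = \La_n$ and choose $m \ge n$, so this is handled directly in the limit. For an arbitrary $\La$ with $Q_1(0) \subseteq \La \subseteq \La_m$, one uses the decomposition $G_1(\cdot, \om_j, \La_m) = G_1(\cdot, \om_j, \La) + G_1(\cdot, \om_j, \La_m \setminus \La)$ together with the fact that $\om_1$ and $\om_2$ coincide on $\La_m \setminus \La$; the additional contribution is then controlled using the pointwise monotonicity $v^+_{m,1} \ge v^+_{m,2}$ and the local minimality of $v^+_{m,j}$ on $\La_m \setminus \La$ (with its own boundary values on $\partial \La \cup \partial \La_m$), which preserves the one-sided bounds.
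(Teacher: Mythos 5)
Your proof takes essentially the same route as the paper's: the same explicit identity for the $\om(0)$-dependence of the functional, the same finite-volume sandwich obtained by playing the two maximal minimizers (which share the deterministic boundary datum $1+C_0\theta\|g\|_\infty$ on $\partial\La_m$) against each other --- the paper writes this as two telescoping decompositions of the energy difference, which is exactly your pair of cross-evaluation inequalities --- and the same limit passage via uniform interior elliptic estimates, Arzel\`a--Ascoli and dominated convergence, with the $v^-$ case reduced to $v^+$ through \eqref{c1}.

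The only real divergence is your final paragraph on localizing from $\La_m$ to a general $\La\subseteq\La_m$. You are right that this is the delicate point (the paper in fact asserts the minimality inequality directly on $\La$ while it is only justified on the full cube, where the two minimizers share boundary values), but your sketched repair does not close it: on $\La_m\setminus\La$ the functions $v^+_{m,1}$ and $v^+_{m,2}$ have different traces on $\partial\La$, so local minimality of either one gives no comparison between $G_1(v^+_{m,1},\om,\La_m\setminus\La)$ and $G_1(v^+_{m,2},\om,\La_m\setminus\La)$; and the pointwise ordering $v^+_{m,1}\ge v^+_{m,2}$ you invoke is not established in the paper (Lemma \ref{FGK} orders minimizers for ordered boundary data at a fixed field, and Remark \ref{R1} gives only the integral monotonicity, itself a consequence of this lemma), and even granted, ordering the functions does not order their energies on the annulus. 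Since the lemma is applied downstream with $\La$ a large cube on which the sandwich can be taken directly, the part of your argument that is actually needed --- the identity, the cross-minimality on $\La_m$, and the limit --- coincides with the paper's proof.
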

 \begin {proof} Let $ \La_n$ be  a cube centered at the origin  so that $ \La \subset \La_n$.  Let  $v^+_n   $ be the maximal minimizer of 
 \begin{equation}  \label{v3a}  \inf_{ (v-(1+C\theta)) 
 \in  H^{1}_0(\Lambda_n)}G_1(v, \om,\La_n).
\end{equation}
Remark  that $v^+_n $   is measurable with respect to  the random field $g (z,\om)$,   $ z \in \La_n \cap \Z^d$. 
We have
\begin{equation} \begin{split}\label{v3b}  &
   G_1(v^+_n(\om (0),\om^{(0)}),(\om(0), \om^{(0)}), \La)
-G_1(v^+_n(\om (0)-h,\om^{(0)}),(\om(0)-h, \om^{(0)}), \La) \cr &=
  G_1(v^+_n(\om (0),\om^{(0)}),(\om(0), \om^{(0)}), \La)-   G_1(v^+_n(\om (0),\om^{(0)}),(\om (0)-h, \om^{(0)}), \La)\cr &+
  G_1(v^+_n(\om(0),\om^{(0)}),(\om(0)-h, \om^{(0)}), \La)- G_1(v^+_n(\om(0)-h,\om^{(0)}),(\om(0)-h, \om^{(0)}), \La).  
 \end {split}  \end{equation}
By explicit computation,  see \eqref{functional2}, we have  that
$$ G_1(v^+_n(\om (0),\om^{(0)}),(\om (0), \om^{(0)}), \La)-   G_1(v^+_n(\om (0),\om^{(0)}),(\om(0)-h, \om^{(0)}), \La)= -  h\theta  \int_{Q_1(0)}v^+_n (\om (0), \om^{(0)} ) dx. $$

  The  last line in \eqref{v3b}    is nonnegative, because $v^+_n(g(0)-h,\om^{(0)})$
is a minimizer of  \eqref {v3a} when  the random field  is $(g(0)-h, \om^{(0)} )$. Therefore 
 $$G_1(v^+_n(\om(0)-h,\om^{(0)}),(\om(0)-h, \om^{(0)}), \La) - G_1(v^+_n(\om (0),\om^{(0)}),(\om (0), \om^{(0)}), \La)\le
    h \theta  \int_{Q_1(0)}v^+_n (\om (0), \om^{(0)} ) dx .$$
By splitting   
   \begin{equation*} \begin{split}  &
   G_1(v^+_n(\om (0),\om^{(0)}),(\om (0), \om^{(0)}), \La)
-G_1(v^+_n(\om (0)-h,\om^{(0)}),(\om (0)-h, \om^{(0)}), \La) \cr &=
  G_1(v^+_n(\om (0),\om^{(0)}),(\om (0), \om^{(0)}), \La)-   G_1(v^+_n(\om (0)-h,\om^{(0)}),(\om (0), \om^{(0)}), \La)\cr &+
  G_1(v^+_n(\om (0)-h,\om^{(0)}),(\om(0), \om^{(0)}), \La)- G_1(v^+_n(\om (0)-h,\om^{(0)}),(\om (0)-h, \om^{(0)}), \La)
 \end {split}  \end{equation*} we obtain in a similar way
 $$G_1(v^+_n(\om (0)-h,\om^{(0)}),(\om (0)-h, \om^{(0)}), \La) - G_1(v^+_n(\om(0),\om^{(0)}),(\om (0), \om^{(0)}), \La)\ge
   h \theta  \int_{Q_1(0)}v^+_n (\om(0)-h, \om^{(0)} ) dx .$$
   To pass to the limit 
note  that the cube $Q_1(0)$ remains fixed. 
Denote by $M$
the smallest integer such that $\Lambda\subseteq B_M(0)$, where $ B_M(0)$ is a ball centered at the origin of radius $M$.   
Let $\xi(r)$ for $r\ge0$  be a smooth  
cut-off function s.t. $\xi(r)=1$ for $r<M,$ $\xi=0$ for $r>2M.$ 
 Note that for
$n>2{\rm diam}(\Lambda)$ the function $$\hat v^+_n:=v^+_n\xi(|x|^2)$$
satisfies 
$$
\Delta \hat v^+_n=f_n(x)
$$ with $\sup_n \|f_n\|_{\infty}<C,$ $C$  depending on ${\rm
diam}(\Lambda),$ 
$\theta,$ the
double well potential,  the cut-off function and  $\|g\|_\infty$ the bound on the  random field.  
The first derivatives of $\hat v^+_n$ are,  away from the boundary,  
H\"older continuous
with any exponent $\alpha<1$ (take $\alpha=1/2$ for definiteness) 
and H\"older norm bounded uniformly in $n$ with a bound
depending only on $C$. 
(See \cite{GT}, Thm. 3.9.  Note that this interior
estimate is applicable, because our domain is a ball containing
$2\Lambda,$ so the square $\Lambda$ is contained in the interior.)
So an application of Arzela Ascoli's Theorem gives that for a
subsequence $v^+_n$ and $\grad v^-_n$ converge uniformly. By
Lebesgues's Theorem on dominated convergence, 
we may pass to the limit under the integral and the claim is shown.


The  corresponding statement for
$v^-$ are proved in the same way.
 \end{proof}  
 \begin{rem} \label {R1}  From  Lemma \ref {A2} we have that
 $$ \om(0)\mapsto \int_{Q_1(0)}v^+(\om(0), \om^{(0)}) {\rm d} x 
 $$ is nondecreasing.

 \end{rem}

\begin{cor}\label{A2b} Let  $\om(i)$ be the random field in the site $i$ which has  probability distribution absolutely continuous w.r.t the Lebesgue measure. 
We have that  $G_1(v^+(\om),\om, \La)$ is ${\mathbb P}$-a.e. differentiable w.r.t to $\om(i)$ and 
$$
\frac {\partial G_1(v^\pm(\om),\om, \La) } {\partial {\om (i)} } = -\theta \int_{Q_1(i)}v^\pm(x,\om) {\rm d} x.
$$
\end{cor}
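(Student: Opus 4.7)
The plan is to deduce the corollary directly from the two-sided difference-quotient bounds furnished by Lemma \ref{A2}, combined with the monotonicity observed in Remark \ref{R1} and the absolute-continuity hypothesis on the distribution of $\om(i)$. First I would note that Lemma \ref{A2}, although stated at the site $0$, holds at an arbitrary site $i$ by an identical proof (replace $Q_1(0)$ by $Q_1(i)$ and perturb the random field only at site $i$). Writing $\om = (\om(i), \om^{(i)})$, this gives, for every $h > 0$ and every fixed $\om^{(i)}$, the estimate
\begin{equation*}
\theta h \int_{Q_1(i)} v^+(\om)\,dx \ge G_1(v^+((\om(i)-h,\om^{(i)})), (\om(i)-h,\om^{(i)}), \La) - G_1(v^+(\om),\om,\La) \ge \theta h \int_{Q_1(i)} v^+((\om(i)-h,\om^{(i)}))\,dx,
\end{equation*}
and, by shifting $\om(i) \mapsto \om(i)+h$, the analogous sandwich for the forward increment. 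Dividing by $h$ and rearranging, both the left and right difference quotients of $s \mapsto G_1(v^+((s,\om^{(i)})),(s,\om^{(i)}),\La)$ at $s=\om(i)$ are squeezed between $-\theta \int_{Q_1(i)} v^+((s,\om^{(i)}))\,dx$ and $-\theta \int_{Q_1(i)} v^+((s\pm h,\om^{(i)}))\,dx$.

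Next I would invoke Remark \ref{R1}, which, again applied at site $i$, states that the map $s \mapsto \int_{Q_1(i)} v^+((s,\om^{(i)}))\,dx$ is nondecreasing. Any monotone function has at most countably many discontinuities, so there is a (Lebesgue-)null set $D(\om^{(i)}) \subset \R$ off of which this map is continuous. At every $s \notin D(\om^{(i)})$ the two sandwich bounds have the common limit $-\theta \int_{Q_1(i)} v^+((s,\om^{(i)}))\,dx$ as $h \to 0^{\pm}$, so both one-sided derivatives exist and coincide, giving the identity claimed in the corollary. Finally, to upgrade this Lebesgue-a.e. differentiability in the coordinate $\om(i)$ to $\Pr$-a.s. differentiability, I would use Fubini: since $\om(i)$ is independent of $\om^{(i)}$ and its law is absolutely continuous with respect to Lebesgue measure, the null set $D(\om^{(i)})$ carries zero mass under the distribution of $\om(i)$ for every $\om^{(i)}$, hence $\Pr(\om(i) \in D(\om^{(i)})) = 0$.

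The argument for $v^-$ is identical since Lemma \ref{A2} and Remark \ref{R1} both apply to $v^-$. No essential obstacle arises; the only step requiring care is precisely the one where the absolute-continuity hypothesis on $\om(i)$ is used, since without it (e.g.\ if the distribution of $\om(i)$ had atoms sitting on $D(\om^{(i)})$) the monotone map could well jump on a set of positive probability and the derivative would not exist $\Pr$-a.s.
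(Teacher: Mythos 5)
Your argument is correct and is essentially the paper's own proof: the authors likewise deduce the corollary by combining the two-sided bounds of Lemma \ref{A2} with the monotonicity in Remark \ref{R1} (so that the squeezing map is continuous off a Lebesgue-null set) and then use the absolute continuity of the law of $\om(i)$ to pass to $\Pr$-a.e. differentiability. Your write-up merely makes explicit the countability of the discontinuity set and the Fubini step, which the paper leaves implicit.
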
 
\begin {proof}It is sufficient to consider the case $i=0.$
By applying Lemma \ref{A2} for $\om(0)$ and $\tilde \om(0):=\om(0)+h$ we see that left and right derivatives
exist and are equal if
$
s\mapsto \int_{Q_1(0)}v^+(s, \om^{(0)}) {\rm d} x 
$ is continuous at $s=\om(0).$ By Remark \ref{R1} this happens for Lebesgue almost all $s,$ hence by
the assumptions on the random field ${\mathbb P} $-a.e.
\end {proof}
  \begin{thm}\label{A3} 
  We have that 
  \begin {equation} \label {MM1} \lim_{ n \to \infty} \frac 1 {\sqrt {| \Lambda_n|}} \left [  F_n (\cdot )  \right ]  \stackrel {D} {=}  Z,     \end {equation}
where $Z$ stands for a  Gaussian   random variable with mean $0$ and variance 
$b^2$  with 
 \begin {equation} \label {MM2}  4 \theta^2 (1+ C_0 \theta \|g\|_\infty)^2 \ge   b^2 \ge \E \left [ \left ( \E   \left [ F_n | \BB(0)\right ] \right )^2 \right ]   \end {equation}
where  $ \BB(0)$ is  the  sigma -algebra generated by $g(0, \om)$ and $C_0$ is given in \eqref {V.1}.
 \end{thm}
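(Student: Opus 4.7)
The strategy is the martingale method of Aizenman and Wehr \cite{AW}: decompose $F_n$ into a sum of bounded martingale differences, apply the martingale central limit theorem, and use the ergodic structure of the problem to identify the limiting variance.

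Enumerate the integer sites of $\La_n$ as $i_1,\ldots,i_{N_n}$ with $N_n := |\La_n\cap\Z^d|$, and let $\FF_k := \sigma(g(i_1),\ldots,g(i_k))$. Since $F_n$ is $\BB_{\La_n}$-measurable with $\E F_n=0$ by \eqref{m9}, we have the telescoping representation
\[F_n = \sum_{k=1}^{N_n} D_k^{(n)}, \qquad D_k^{(n)} := \E[F_n|\FF_k] - \E[F_n|\FF_{k-1}],\]
which is a martingale difference sequence. The crucial quantitative input is a uniform bound on the increments. Corollary \ref{A2b}, applied to both $v^+$ and $v^-$, together with the $L^\infty$-bound \eqref{eq:bound} and $|g|\le 1$, implies that the oscillation of $G_1(v^+,\om,\La_n) - G_1(v^-,\om,\La_n)$ in any single coordinate $\om(i)$ is bounded by $K := 4\theta(1+C_0\theta\|g\|_\infty)$. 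Hence $|D_k^{(n)}|\le K$; moreover, since the conditional distribution of $F_n$ given $\FF_{k-1}$ is supported in an interval of length $K$, we also obtain the sharper variance bound $\E[(D_k^{(n)})^2|\FF_{k-1}]\le K^2/4 = 4\theta^2(1+C_0\theta\|g\|_\infty)^2$.

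The convergence \eqref{MM1} then follows from the martingale CLT provided (i) the conditional Lindeberg condition holds --- which is immediate from $|D_k^{(n)}|\le K$, since $|D_k^{(n)}|/\sqrt{N_n}\to 0$ uniformly --- and (ii) the conditional quadratic variation $V_n := N_n^{-1}\sum_k\E[(D_k^{(n)})^2|\FF_{k-1}]$ converges in probability to a deterministic constant $b^2$. For (ii) one uses the translation covariance of $v^\pm$ from Theorem \ref{infvol} and the stationarity and ergodicity of $g$ to rewrite $V_n$ as a volume average over translates of a stationary function of the environment, with $b^2 = \lim N_n^{-1}\E[F_n^2]$ obtained from Birkhoff's theorem. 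The upper bound $b^2 \le 4\theta^2(1+C_0\theta\|g\|_\infty)^2$ in \eqref{MM2} then follows directly from the pointwise bound on the conditional variances. For the lower bound $b^2\ge \E[(\E[F_n|\BB(0)])^2]$, I would exploit orthogonality of the Hoeffding decomposition to write $\E[F_n^2] \ge \sum_i\E[(\E[F_n|g(i)])^2]$; translation covariance and the stationarity of $g$ then make each summand, for $i$ in the interior, comparable to $\E[(\E[F_n|\BB(0)])^2]$, which yields the claim after dividing by $N_n$.

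The main obstacle is point (ii): although $V_n$ is a Ces\`aro average in some sense, the individual terms $\E[(D_k^{(n)})^2|\FF_{k-1}]$ depend both on the current site $i_k$ and on the full history $\FF_{k-1}$, neither of which is translation invariant under a finite-volume enumeration. The translation covariance of the infinite-volume states $v^\pm$ from Theorem \ref{infvol} is the key ingredient that converts the sum into a genuine Birkhoff average, with the $O(N_n^{(d-1)/d})$ boundary contributions being absorbed using the volume-surface estimate of Lemma \ref{A1}.
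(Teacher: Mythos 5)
Your overall strategy is the paper's own: write $F_n$ as a telescoping sum of martingale differences over the lattice sites in lexicographic order, verify the conditional Lindeberg condition and the convergence in probability of the conditional quadratic variation, and invoke the martingale CLT of Hall--Heyde. Several of your steps are fine and in one place cleaner than the paper: the Lindeberg condition really is immediate once the increments are uniformly bounded (the paper's Lemma \ref{LL1} takes a longer H\"older/Chebyshev route to the same end), the upper bound in \eqref{MM2} via the bounded-oscillation variance estimate gives the stated constant, and your lower bound via orthogonality of the Hoeffding decomposition plus stationarity is a legitimate variant of the paper's argument (the paper instead applies Jensen to the stationary increment $W_0$ and computes $\E[W_0\mid\BB(0)]=\E[F_n\mid\BB(0)]$ directly, which avoids having to compare $\E[F_n\mid g(i)]$ for interior $i$ with $\E[F_n\mid g(0)]$).

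The genuine gap is exactly the point you flag as ``the main obstacle'' and then do not resolve: the convergence $V_n\to b^2$. The terms $\E[(D_k^{(n)})^2\mid\FF_{k-1}]$ are not a stationary sequence --- both $F_n$ (through the conditioning on $\BB_{\La_n}$) and the filtration $\FF_{k-1}$ depend on the finite volume --- so translation covariance of $v^\pm$ alone does not ``rewrite $V_n$ as a volume average of a stationary function.'' The paper's missing ingredient is the construction, in Lemma \ref{d2}, of volume-independent increments $W_i=\E[H\mid\BB_i^{\le}]-\E[H\mid\BB_{i-1}^{\le}]$ with $H=G_1(v^+,\om,\La_n)-G_1(v^-,\om,\La_n)$ and $\BB_i^{\le}$ the \emph{half-space} $\sigma$-algebra generated by all $g(z)$ with $z\le i$: by Corollary \ref{A2b} the dependence of $H$ on the single coordinate $\om(i)$ is $-\theta\int_{Q_1(i)}\int_{\tilde\om(i)}^{\om(i)}v^\pm\,ds\,dx$, which does not involve $\La_n$, hence $W_i$ is independent of $\La_n$ and satisfies $W_i[\om]=W_0[T_{-i}\om]$; Birkhoff then applies to $\E[W_i^2\mid\BB_{i-1}^{\le}]$, and the finite-volume quantities are recovered from $Y_{n,i}=\E[W_i\mid\BB_{\La_n}]$ together with an $L^2$ martingale-convergence estimate showing $\E[(Y_{n,0}-W_0)^2]\to0$ as $\dist(i,\partial\La_n)\to\infty$. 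Your suggestion that the boundary discrepancy is ``absorbed using the volume--surface estimate of Lemma \ref{A1}'' is not the right tool here: Lemma \ref{A1} controls the energy difference $|G_1(v^+)-G_1(v^-)|$, not the distance between the finite-volume conditional variances and their stationary counterparts; what controls the latter is the $L^2$ approximation just described, with the near-boundary sites contributing a vanishing fraction of the Ces\`aro average. Without the $W_i$ construction, point (ii) --- and hence the identification of $b^2$ and both bounds in \eqref{MM2} --- is not established.
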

 \begin {proof}  We prove the  theorem  invoking the  general result  presented in the appendix.
 In order to do so, we need to establish the relevant conditions.
   We decompose $  F_n $ as a martingale difference sequence. We 
  order the points in $\La_n \cap \Z^d$  according to the lexicographic ordering.   In the following 
    $ i \le  j$     refers  to the lexicographic ordering. 
  Any other ordering will be fine but it is convenient to fix one.
 We  introduce the family of increasing $\sigma-$ algebra 
 $  \BB_{n,i}$,  $ i \in  \La_n \cap \Z^d $ where 
 $  \BB_{n,i}$ is the $\sigma-$ algebra  generated by the random  variables $ \{g (z), z\in \La_n \cap \Z^d, z \le i  \}  $.  We denote by 
 $$  \BB_{n,0}= (\emptyset, \Omega),  \quad     \BB_{n,i} \subset   \BB_{n,j}   \qquad  i \le j, \quad i \in     \La_n \cap \Z^d,  \quad j \in  \La_n \cap \Z^d.  $$ 
           We split 
   \begin{equation} \label{m91}     F_n  = \sum_{i \in \Z^d \cap \La_n } \left ( \E[ F_n| \BB_{n,i}] - \E[ F_n | \BB_{n,i-1}]\right ):=  \sum_{i \in \Z^d \cap \La_n }  Y_{n,i}.    \end {equation} 
        By construction     $ \E \left [  Y_{n,i}\right ]= 0$ for $i \in \Z^d \cap \La_n $, 
        $ \E \left [  Y_{n,i} | \BB_{n,k} \right ]= 0$, for all  $0 \le  k \le i-1$.
  Denote  
 \begin{equation}  \label{v2}  
V_n: = \frac 1 {   |\La_n \cap  \Z^d|}   \sum_{i \in \La_n \cap  \Z^d} \E \left [  Y^2_{n,i}  | \BB_{n,i-1} \right ] .    \end {equation}  
In Lemma \ref {d2}  stated and proven below we show that  $ V_n \to b^2$  in probability  and $b^2$  satisfies \eqref{MM2}.   
In Lemma \ref {LL1}   stated and proven below we show that   for any $a>0$ 
 \begin{equation}  \label{v3}    U_{n} (a): = \frac 1 {   |\La_n \cap  \Z^d|}   \sum_{i \in   \La_n \cap  \Z^d }  \E [  Y^2_{n,i} 1_{\{  |Y_{n,i}| \ge  a  \sqrt { |\La_n \cap  \Z^d|}\}}  |  \BB_{n,i-1} ]   \end {equation} 
converges  to $0$ in probability.  
We can then invoke Theorem 5.1, stated in the appendix.   
   The correspondence to the   notation used in the appendix is the following. Identify $|\La_n \cap  \Z^d|$ with $n$,  
 $    \frac {F_n} {\sqrt { |\La_n \cap  \Z^d|} } \leftrightarrow  S_n $,  $\frac {Y_{n,i} }  {\sqrt { |\La_n \cap  \Z^d|} } \leftrightarrow   X_{n,i}$ and $\BB_{n,i}  \leftrightarrow  \FF_{n,i} $.  
 Then \eqref {MM1} is obtained. 
    \end   {proof}

\begin {lem}  \label {d2} Let  $V_n$  be the quantity  defined in \eqref {v2}. For all $\delta>0$
 \begin{equation}  \label{D1}  
\lim_{n \to \infty}  \Pr \left [   |V_n  -b^2| \ge \delta \right ] =0,
\end {equation} 
where   for   $W_0$ is defined in \eqref{g1}
\begin{equation}  \label{D2}   b^2=  \E \left [ W_0^2 \right ].
    \end {equation} 
Further 
\begin {equation} \label {MM2a} 4 \theta^2 (1+ C_0 \theta \|g\|_\infty)^2 \ge b^2 \ge \E \left [ \left ( \E   \left [ F_n | \BB(0)\right ] \right )^2 \right ],   \end {equation}
where $C_0$ is given in \eqref {V.1}.
\end {lem}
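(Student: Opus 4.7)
The plan has three components: obtaining an explicit representation of each conditional second moment $\E[Y_{n,i}^2 \mid \BB_{n,i-1}]$ as the variance (over $g(i)$) of a bounded Lipschitz function, passing to the limit using translation covariance and Birkhoff's ergodic theorem, and extracting the two-sided bound on $b^2$. Setting $\tilde F_n(\om) := G_1(v^+(\om),\om,\La_n) - G_1(v^-(\om),\om,\La_n)$ so that $F_n = \E[\tilde F_n \mid \BB_{\La_n}]$ and hence $Y_{n,i} = \E[\tilde F_n \mid \BB_{n,i}] - \E[\tilde F_n \mid \BB_{n,i-1}]$ by the tower property, and denoting by $f_{n,i}(s;\om_{<i})$ the conditional expectation of $\tilde F_n$ obtained by fixing $g(i)=s$ and $\om_{<i}$ and integrating out everything else, one has
\[
Y_{n,i} = f_{n,i}(g(i);\om_{<i}) - \int f_{n,i}(s;\om_{<i})\,d\mu(s), \qquad \E[Y_{n,i}^2 \mid \BB_{n,i-1}] = \mathrm{Var}_\mu\bigl(f_{n,i}(\,\cdot\,;\om_{<i})\bigr).
\]
Corollary \ref{A2b} gives $\partial_s f_{n,i}(s;\om_{<i}) = h_{n,i}(s;\om_{<i})$, the analogous conditional expectation of $W_i(\om) := -\theta\int_{Q_1(i)}(v^+(x,\om) - v^-(x,\om))\,dx$, which is uniformly bounded by $2\theta(1+C_0\theta\|g\|_\infty)$ thanks to \eqref{eq:bound}. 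Combining the ensuing Lipschitz estimate $|f_{n,i}(s) - f_{n,i}(s')| \le 2\theta(1+C_0\theta\|g\|_\infty)|s-s'|$ with the identity $\mathrm{Var}_\mu(f) = \tfrac12\!\int\!\!\int(f(s) - f(s'))^2 d\mu(s) d\mu(s')$ and $\E[g(0)^2] = 1$ yields the pointwise bound $V_n \le 4\theta^2(1+C_0\theta\|g\|_\infty)^2$, hence the upper half of \eqref{MM2a}.

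For the convergence, observe that for fixed $i$ and $n\to\infty$, the $\sigma$-algebras $\mathcal{G}_n := \sigma(g(j): j\le i,\ j \in \La_n)$ increase to $\BB_{\le i}$, so martingale convergence combined with the uniform bound on $W_i$ gives $h_{n,i} \to h_{\infty,i}(s;\om_{<i}) := \E[W_i \mid \BB_{\le i}]|_{g(i)=s}$ in $L^2(\Pr)$; consequently $\mathrm{Var}_\mu(f_{n,i}) \to \Psi_i(\om) := \mathrm{Var}_\mu\bigl(\int_0^{\,\cdot} h_{\infty,i}(r;\om_{<i})\,dr\bigr)$ in $L^1(\Pr)$. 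The translation covariance \eqref{rome1} of $v^\pm$ and stationarity of $\Pr$ imply $W_i = W_0 \circ T_{-i}$ and hence $\Psi_i = \Psi_0 \circ T_{-i}$; Birkhoff's ergodic theorem applied to the bounded $T$-stationary sequence $\{\Psi_0 \circ T_{-i}\}_{i \in \Z^d}$ yields $\frac{1}{|\La_n\cap\Z^d|}\sum_i \Psi_0\circ T_{-i} \to \E[\Psi_0]$ $\Pr$-a.s. Setting $b^2 := \E[\Psi_0]$ identifies this with $\E[W_0^2]$ for the $W_0$ of \eqref{g1}; the boundary sites (a fraction $O(1/n)$ of $\La_n\cap\Z^d$) are negligible, so $V_n \to b^2$ in probability.

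For the lower bound $b^2 \ge \E[(\E[F_n \mid \BB(0)])^2]$, use the Hoeffding (ANOVA) decomposition $F_n = \E F_n + \sum_{S \neq \emptyset} Z_S$ into pairwise-orthogonal pieces with $Z_S$ a function of $\{g(j): j \in S\}$; then $\E[F_n \mid \BB(0)] = Z_{\{0\}}$ (using $\E F_n = 0$) and $\E[F_n^2] \ge \sum_{i \in \Z^d} \E[Z_{\{i\}}^2]$. Translation invariance of $\Pr$ together with the covariance of $v^\pm$ makes $\E[Z_{\{i\}}^2]$ asymptotically constant for $i$ in the bulk of $\La_n$, so this sum exceeds $|\La_n\cap\Z^d|\,\E[(\E[F_n\mid\BB(0)])^2](1+o(1))$; dividing by $|\La_n\cap\Z^d|$ and using $\E[V_n]\to b^2$ finishes the estimate. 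The principal technical obstacle is in the convergence step: $\mathrm{Var}_\mu(f_{n,i}) \to \Psi_i$ must hold in an averaged $L^1$ sense uniformly as $i$ ranges over the bulk of $\La_n$, not merely for fixed $i$, because Birkhoff is applied to a double-indexed array; the route around this is to approximate $\Psi_0$ by a cylinder function $\Psi_0^{(R)}$ depending only on $\{g(j): |j| \le R\}$, use Proposition \ref{Lip} and the interior Schauder estimates of Lemma \ref{A2} to obtain uniform convergence $\mathrm{Var}_\mu(f_{n,i}) \to \Psi_0^{(R)}\circ T_{-i}$ for $i$ at distance at least $R+1$ from $\partial \La_n$, apply Birkhoff for the truncated field, and let $R \to \infty$.
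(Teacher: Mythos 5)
Your proof is correct, and its core — the identification of the $n$-independent infinite-volume martingale increments via Corollary \ref{A2b}, the translation covariance $W_i=W_0\circ T_{-i}$, Birkhoff's theorem, and an $L^2$/martingale-convergence argument to replace the finite-volume conditional variances by the infinite-volume ones uniformly over bulk sites — is the same as the paper's (the paper handles the uniformity through the quantity $b_1(R_n(i))$ with $R_n(i)=\mathrm{dist}(i,\partial\La_n)$, which is your cylinder-approximation step in slightly different clothing). Where you genuinely diverge is in the two inequalities of \eqref{MM2a}. For the upper bound the paper deduces $|W_0|\le 2\theta(1+C_0\theta\|g\|_\infty)$ directly from Corollary \ref{A2b} and squares; your route via $\mathrm{Var}_\mu(f)\le \mathrm{Lip}(f)^2\,\E[g(0)^2]$ is equivalent in substance and arguably cleaner in how it produces the stated constant. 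For the lower bound the paper simply applies Jensen's inequality conditionally on $\BB(0)$, $\E[W_0^2]\ge\E[(\E[W_0\mid\BB(0)])^2]$, and then checks the identity $\E[W_0\mid\BB(0)]=\E[F_n\mid\BB(0)]$; your Hoeffding/ANOVA decomposition $\E[F_n^2]\ge\sum_i\E[Z_{\{i\}}^2]$ combined with $\E[V_n]=|\La_n\cap\Z^d|^{-1}\E[F_n^2]$ is a valid alternative, but it is longer and hinges on the assertion that $\E[Z_{\{i\}}^2]$ is (asymptotically) equal to $\E[Z_{\{0\}}^2]$; since $\tilde F_n$ itself is not translation covariant (the domain $\La_n$ is fixed), this needs the extra observation that by Corollary \ref{A2b} the function $s\mapsto\E[\tilde F_n\mid g(i)=s]$ has derivative $-\theta\,\E[\int_{Q_1(i)}(v^+-v^-)\,dx\mid g(i)=s]$, which \emph{is} site-independent in law by covariance of $v^\pm$, and together with the mean-zero normalization this pins down $Z_{\{i\}}\stackrel{D}{=}Z_{\{0\}}$ for all $i$ with $Q_1(i)\subset\La_n$. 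With that line added your argument is complete; the paper's conditional-Jensen route buys you that identity for free.
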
 
\begin {proof}
The proof of \eqref {D1}  is  done by applying conveniently  the ergodic theorem.  We introduce new sigma-algebra $\BB_i^{\le}$ generated by the random fields $ \{ g(z, \om),  z \in \Z^d,  z   \le i \} $ where $\le $ refers to the lexicographic ordering. 
 Define for $ i \in \Lambda_n$
\begin{equation} \label{g1}  W_i[\omega]=  \E \left [G_1(v^+ (\om), \om,\La_n) - G_1(v^- (\om), \om, \La_n) | \BB_i^{\le}\right ] - \E \left [  G_1(v^+(\om), \om,\La_n) - G_1(v^-(\om), \om, \La_n) | \BB_{i-1}^{\le}\right ].  \end{equation}    
Note that      $W_i$ is a   random variable  depending on   random fields on sites   smaller  or equal than $i$ under   the lexicographic order. In particular it does not depend on the choice of the cube $\La_n$ provided $ i \in  \Lambda_n$.  To verify this statement notice that   $v^{\pm} (\om)$ does not depend on $\La_n$.  Further 
   denote  for   $ i \in \Lambda_n$,  $\omega= (\omega^<_i,  \omega(i), \omega^>_i)$ where $\omega^<_i= (\omega(j), j<i)$  and $\omega^>_i= (\omega(j), j>i)$,   
   $\tilde \omega= (\omega^<_i,  \tilde \omega(i), \omega^>_i)$ and  $  \omega(s)= (\omega^<_i,  s, \omega^>_i)$, $ s \in [-1,1]$. 
 We can write $W_i[\omega]$ as 
  \begin{equation} \begin {split} \label{g1b}  &W_i[\omega]=  \int \Pr (d\omega^>_i)  \Pr(d\tilde \omega (i)) \left [ 
  G_1(v^+ (\om), \om,\La_n) - G_1(v^+ (\tilde \om), \tilde \om, \La_n) \right ] \cr &
  -  \int \Pr (d\omega^>_i)  \Pr(d\tilde \omega (i)) \left [ 
  G_1(v^- (\om), \om,\La_n) - G_1(v^- (\tilde \om), \tilde \om, \La_n) \right ]. \end {split}
  \end{equation}    
 By Corollary \ref{A2b},   $  G_1(v^+ (\om), \om,\La_n)$ is   a.e.
differentiable w.r.t $\omega(i)$ with derivative depending only on the random field on $Q_1(i)$. 
Therefore one has 
\begin{equation} \begin {split}  
  G_1(v^+ (\om), \om,\La_n) - G_1(v^+ (\tilde \om), \tilde \om, \La_n)  & = \int_{\tilde \omega (i)}^{ \omega (i)} 
\frac {\partial } {\partial s}  G_1(v^+(\om (s)),\om (s), \La_n)  ds  \cr & =- \theta  \int_{Q_1(i)} dx
  \int_{\tilde \om(i)}^{\om(i)} v^+(x, \omega^<,s,\omega^>) ds.   \end {split}  \end{equation}    
  Similar considerations hold for the last  addend of \eqref {g1b}. 
  Hence     $W_i$ does not depend on the choice of $\Lambda_n,$ provided $Q_1(i)\subset \Lambda_n$,   \begin {footnote} {When the distribution of the random field is  not absolutely continuous with respect to the Lebesgue measure we  are able to prove  only  that    the  bounds of the discrete derivative $ \frac {\partial   G_1(v^+ (\om), \om,\La_n) } {\partial \omega (i)}$  are  independent of $\La_n$.  But   this obviously is not enough  to  show  that $W_i$ does not depend on the choice of $\Lambda_n$. } \end {footnote}.
 Note that from the translation  covariant properties of $v^\pm$ we have
    $$W_i [\om]= W_0[ T_{-i} \om]. $$
     By construction, see \eqref {m91},     for any $i$ provided $n$ large enough so that   $i \in \La_n$,  we have 
\begin{equation} \label{LL.2} Y_{n,i} = \E \left [  W_i | \BB_{\La_n} \right ].  \end{equation}
   Further by Corollary \ref {A2b} \begin{equation} \label{LL.5} |W_0 (\om)| \le 2 \theta (1+ C_0 \theta \|g\|_\infty),  \quad  \om \in \Omega  \end{equation}
   where $C_0$ is given in \eqref {V.1}. 
Applying   the ergodic  theorem we have that    in probability 
\begin{equation} \label{VV1}  \lim_{n \to \infty}  \frac 1 {|\La_n \cap  \Z^d|} \sum_{i \in  \La_n \cap  \Z^d }  \E \left [  W^2_i | \BB^{\le}_{i-1} \right ]=  \E \left [ W_0^2 \right ].  \end{equation} 
Set $ \E \left [ W_0^2 \right ]= b^2$. 
   Recalling the definition of  $ V_n$ given in \eqref {v2} the proof of   \eqref {D1}   is completed   if we show the following. 
  For any $ \delta >0$ 
     \begin{equation} \label{g2}
 \lim_{n \to \infty} \Pr \left [  | \E \left [ Y_{n,i}^2 | \BB_{n,i-1 } \right ]   -  \E \left [  W_i^2 | | \BB^{\le}_{i-1} \right ] | \ge \delta \right]  =0.  \end{equation} 
 We show \eqref{g2} applying Chebyshev's  inequality. 
We  split 
\begin{equation} \begin {split} \label{ LL.10}
&  \left \{  \E \left [ Y_{n,i}^2 | \BB_{n,i-1 } \right ]   -  \E \left [  W_i^2   | \BB^{\le}_{i-1} \right ]\right \} \cr &=
  \E \left [ Y_{n,i}^2 - W_i^2  | \BB_{n,i-1 } \right ] + \E \left [W_i^2  | \BB_{n,i-1 } \right ]  -  \E \left [ W_i^2 | | \BB^{\le}_{i-1} \right ].
 \end {split}  \end{equation} 
 Denote  $f_i=  \E \left [W_i^2  | \BB^{\le}_{i-1}  \right ] $,  $ R_n= R_n(i)= dist (i, \partial \Lambda_n)$ and
$\BB_{i+ [-R_n,R_n]^d}$ the $\sigma-$ algebra generated by the random fields in the box   $[-R_n,R_n]^d $ centered in $i$. 
 We have
\begin{equation}   \label{LL.11}
\E \left [ \left (  \E \left [W_i^2  | \BB_{n,i-1 } \right ]  -  \E \left [ W_i^2 | | \BB^{\le}_{i-1} \right ]  \right )^2  \right ]  \le \E \left [ \left (  f_0 - E [f_0| \BB_{[-R_n,R_n]^d} ]\right  )^2 \right ]:= b_1(R_n).  \end{equation} 
  When  $ \lim_{n \to \infty}   R_n =\infty $,    
we have  for any square integrable function 
$$ \lim_{n \to \infty}  b_1(R_n) = 0. $$
Further  by   \eqref {LL.2}    we have 
\begin{equation}   \label{ LL.12}
   \E \left [ \left |  \E \left [ Y_{n,i}^2 - W_i^2  | \BB_{n,i-1 } \right ]  \right | \right ]  \le  \E \left [ \left |  Y_{n,0}^2 - W_0^2    \right | \right ] \le\left ( E [W_0^2] \right )^\frac 12 \left ( E [ (  Y_{n,0} - W_0)^2] \right )^\frac 12.   \end{equation} 
  Arguing as before, see \eqref {LL.11},  we get  
  $$  \lim_{n \to \infty} E [ (  Y_{n,0} - W_0)^2]=0 $$
 proving \eqref {g2}. 
   To  get    \eqref {MM2a} we  
denote $ \BB(0)$ the  sigma -algebra generated by $g(0, \om)$ and   by Jensen's inequality we obtain
 $$  \E \left [ W_0^2 \right ]  = \E \left [ E [   W_0^2 | \BB(0)]  \right ]    \ge   \E \left [ \left (  \E \left [ W_0| \BB(0)\right ]\right)^2 \right ] . $$
By simple computation, taking in account that 
 $$ \E \left [  \E \left [  G_1(v^+, \om,\La_n) - G_1(v^-, \om, \La_n) | \BB_{-1}^{\le}\right ] | \BB(0)\right ] =0,$$ we have 
   \begin{equation}  \begin {split}   &\E \left [ W_0| \BB(0)\right ]      \cr &= \E \left [ \E \left [  G_1(v^+( \cdot), \cdot,\La_n) - G_1(v^- (\cdot), \om, \La_n) | \BB_0^{\le}\right ] - \E \left [  G_1(v^+, \om,\La_n) - G_1(v^-, \om, \La_n) | \BB_{-1}^{\le}\right ] | \BB(0)\right ]       \cr & =\E \left [ \E \left [  G_1(v^+ (\om), \om,\La_n) - G_1(v^- (\om), \om, \La_n) |  | \BB_{0}^{\le}\right ] | \BB(0)\right ] \cr &= 
 \E \left [ F_n |  \BB(0)\right ] . \end {split}   \end{equation}  
  The lower bound \eqref {MM2a} is proven.  
 \end {proof}
   \begin{lem} \label{LL1}  Let  $U_n(a)$ defined in \eqref {v3}. For any $a>0$ for any $ \delta >0$ 
   $$ \lim_{n \to \infty} \Pr \left [ U_n(a) \ge \delta \right] =0. $$
\end{lem}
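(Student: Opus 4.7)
The plan is to exploit the uniform boundedness of the martingale differences $Y_{n,i}$, which makes the Lindeberg-type condition trivial. The key input has already been established in the course of the preceding proof: from \eqref{LL.2} we have $Y_{n,i} = \E[W_i \mid \BB_{\Lambda_n}]$, and from \eqref{LL.5} we have the uniform almost-sure bound $|W_i(\omega)| \le 2\theta(1 + C_0\theta\|g\|_\infty)$ for every $i \in \Z^d$ and every $\omega \in \Omega$. Since conditional expectation is an $L^\infty$-contraction, the same bound passes to $Y_{n,i}$.

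Concretely, set $K := 2\theta(1 + C_0\theta\|g\|_\infty)$. Then $|Y_{n,i}| \le K$ for all $i \in \Lambda_n \cap \Z^d$ and all $n$. Now fix $a > 0$ and choose $n_0 = n_0(a)$ large enough so that $a\sqrt{|\Lambda_n \cap \Z^d|} > K$ for all $n \ge n_0$; such an $n_0$ exists because $|\Lambda_n \cap \Z^d| \to \infty$ as $n \to \infty$. For every $n \ge n_0$ the event $\{|Y_{n,i}| \ge a\sqrt{|\Lambda_n \cap \Z^d|}\}$ is empty, so the indicator $\mathbf{1}_{\{|Y_{n,i}| \ge a\sqrt{|\Lambda_n \cap \Z^d|}\}}$ is identically zero. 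Therefore $U_n(a) = 0$ almost surely for all $n \ge n_0$, which trivially implies $\Pr[U_n(a) \ge \delta] \to 0$ for every $\delta > 0$.

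No serious obstacle is expected here: the only non-trivial ingredient is the uniform bound on $W_0$ coming from Corollary \ref{A2b}, and this has already been used in the previous lemma. The Lindeberg condition is the easy one of the two hypotheses needed to invoke the martingale CLT (Theorem 5.1 of the appendix); it is only the convergence of the conditional variance $V_n$ in Lemma \ref{d2} that required the ergodic theorem.
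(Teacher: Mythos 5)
Your argument is correct, and it is a genuinely different (and in fact more direct) route than the paper's. The paper also reduces everything to the representation \eqref{LL.2} and the uniform bound \eqref{LL.5}, but then proceeds via Chebyshev, $\Pr[U_n(a)\ge\delta]\le\delta^{-1}\E[U_n(a)]$, and controls $\E\bigl[Y_{n,i}^2\,1_{\{|Y_{n,i}|\ge a\sqrt{|\La_n\cap\Z^d|}\}}\bigr]$ by H\"older with exponents $p,q$, bounding $\E[Y_{n,i}^{2q}]\le\E[W_0^{2q}]$ by Jensen and estimating $\Pr[|Y_{n,i}|>a\sqrt{|\La_n\cap\Z^d|}]$ by a second application of Chebyshev; the product then tends to zero. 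You instead observe that the $L^\infty$-contraction property of conditional expectation transfers the pointwise bound $|W_i|\le K:=2\theta(1+C_0\theta\|g\|_\infty)$ (which holds for all $i$ by the covariance $W_i[\om]=W_0[T_{-i}\om]$) directly to $Y_{n,i}$, so that for $n$ large the truncation event is $\Pr$-null and $U_n(a)=0$ a.s. This yields a stronger conclusion (eventual almost-sure vanishing rather than convergence in probability) with less machinery, and is the standard way to check Lindeberg for uniformly bounded martingale differences. The only thing the paper's longer argument buys is robustness: it would survive if $g$, and hence $W_0$, were unbounded but had finite moments of order $2q$ for some $q>1$, a situation the authors explicitly flag as a possible relaxation of the hypotheses. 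Under the stated assumption $\|g\|_\infty=1$ your shortcut is fully justified; just phrase the conclusion as ``the event has probability zero'' rather than ``is empty,'' since $|Y_{n,i}|\le K$ only holds almost surely once a version of the conditional expectation is fixed.
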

\begin {proof}
By  Chebyshev's inequality   we have that
$$  \Pr \left [ U_n(a) \ge \delta \right]  \le \frac 1 \delta \E[U_n(a)]. $$ 
Next we show  $ \E[U_n(a)] \to 0$ for all $ a>0$. 
\begin {equation} \begin {split} &  \E [  U_n(a)] =  \frac 1 {   |\La_n \cap  \Z^d|}   \sum_{i=1}^{   |\La_n \cap  \Z^d|} \E \left [  Y^2_{n,i}1_{\{  |Y_{n,i}| \ge  a  \sqrt { |\La_n \cap  \Z^d|}\}}   \right ] \cr &  \le
 \frac 1 {   |\La_n \cap  \Z^d|}   \sum_{i=1}^{   |\La_n \cap  \Z^d|}\left ( \E [ Y_{n,i}^{2q}] \right )^\frac 1 {q} \left ( \Pr \left [ | Y_{n.i}| > a \sqrt { |\La_n \cap  \Z^d|}\right] \right )^{\frac 1 p}. 
 \end {split} \end {equation} 
By  Jensen inequality and definition  \eqref {LL.2} we have
$$   \E [ Y_{n,i}^{2q}]   \le \E [ W_0^{2q}],$$
which is a bounded quantity for all $q\ge 1$ since \eqref {LL.5}. 
Applying Cheybishev inequality  and arguing as before 
we have 
 $$ \Pr \left [ | Y_{n,i}| >   a    \sqrt { |\La_n \cap  \Z^d|} \right ] \le \frac {  \E [ W_0^{2}]} {a^2   |\La_n \cap  \Z^d|}, $$ 
which  for all $a>0$ tends to $0$ when $n \to \infty$. 
 \end {proof}

\begin{lem}   \label {A2c}   For    $ \La \subset  \R^d$, $ 0 \in \La$,     
we have
$$
\frac {\partial} {\partial g(0)}  \E \left [ F_n| \BB(0)\right ] =   
  - \theta  \E \left [ \int_{ Q_1(0)} v^+ (x, \om) dx  | \BB(0) \right ] + \theta  \E \left [ \int_{ Q_1(0)} v^- (x, \om) dx  | \BB(0) \right ]
  $$
where   $Q_1 (0):=[-1/2,1/2]^d$.
Further 
$$ \E \left [  \frac {\partial} {\partial g(0)}  \E \left [ F_n| \BB(0)\right ]  \right ] =   - 2 \theta m^+. $$ 
\end {lem}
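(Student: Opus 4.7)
\vspace{0.2cm}
\noindent\textbf{Proof plan.} The plan is to push the derivative $\partial/\partial g(0)$ inside the conditional expectation $\E[\,\cdot\mid \BB(0)]$ and then invoke Corollary \ref{A2b} for the pointwise derivative of $G_1(v^\pm(\om),\om,\La_n)$ with respect to $\om(0)$. Since $F_n$ is $\BB_{\La_n}$-measurable and $\BB(0)\subseteq \BB_{\La_n}$, the tower property gives
$$
\E[F_n\mid \BB(0)](\om(0))=\int\bigl[G_1(v^+(\om),\om,\La_n)-G_1(v^-(\om),\om,\La_n)\bigr]\,\Pr(d\om^{(0)}),
$$
where I write $\om=(\om(0),\om^{(0)})$. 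By Corollary \ref{A2b}, for $\Pr$-a.e.\ $\om^{(0)}$ the integrand is a.e.\ differentiable in $\om(0)$ with derivative
$$
-\theta\int_{Q_1(0)}v^+(x,\om)\,dx+\theta\int_{Q_1(0)}v^-(x,\om)\,dx,
$$
which by \eqref{eq:bound} is bounded uniformly by $2\theta(1+C_0\theta\|g\|_\infty)$. This uniform bound on difference quotients together with absolute continuity of the law of $g(0)$ (Assumption \eqref{eq:ass}) will let me invoke dominated convergence to interchange $\partial/\partial g(0)$ with the integral over $\om^{(0)}$, delivering the first displayed identity of the lemma.

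For the second identity I would take a full expectation and apply the tower property once more:
$$
\E\!\left[\frac{\partial}{\partial g(0)}\E[F_n\mid \BB(0)]\right]=-\theta\,\E\!\left[\int_{Q_1(0)}v^+(x,\om)\,dx\right]+\theta\,\E\!\left[\int_{Q_1(0)}v^-(x,\om)\,dx\right]=-\theta m^++\theta m^-=-2\theta m^+,
$$
using the definition of $m^\pm$ in \eqref{M1a} and the symmetry $m^-=-m^+$ established in Theorem \ref{infvol}.

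The only delicate step will be the interchange of $\partial/\partial g(0)$ with $\E[\,\cdot\mid\BB(0)]$, since Corollary \ref{A2b} gives pointwise differentiability only $\Pr$-a.e.\ in $\om(0)$. What makes this go through is the combination of the uniform $L^\infty$ bound on $v^\pm$ from \eqref{eq:bound}, which furnishes an integrable majorant for the difference quotients uniformly in the step size, and the absolute continuity of the distribution of $g(0)$, which ensures that the $\Pr$-null set of non-differentiability is harmless when we integrate over $\om(0)$. Together these two ingredients allow dominated convergence to be applied on a set of full $\Pr$-measure and yield differentiability of $\om(0)\mapsto \E[F_n\mid\BB(0)](\om(0))$ with the claimed derivative.
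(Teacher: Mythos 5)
Your argument is correct and follows essentially the same route as the paper: the paper's rigorous justification is precisely ``take conditional expectations and apply Corollary \ref{A2b}'', and you supply the dominated-convergence details (uniform bound on the difference quotients from Lemma \ref{A2} plus absolute continuity of the law of $g(0)$) that the paper leaves implicit, while the second identity is handled identically via the tower property and $m^-=-m^+$. The only difference is cosmetic: the paper prefaces this with a formal envelope-theorem computation (the $\partial G_1/\partial v$ term vanishing at the minimizer) as motivation, which you omit.
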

\begin {proof} We first give the idea by a formal computation.
\begin {equation} \label {L6} \begin {split}  & \frac {\partial} {\partial g(0)}  \E \left [ F_n| \BB(0)\right ]  = \frac {\partial} {\partial g(0)}  \E \left [ G_1( v^+, \om, \La) | \BB(0)\right ]-   \frac {\partial} {\partial g(0)}  \E \left [ G_1( v^-, \om, \La) | \BB(0)\right ]\cr &
= \E \left [ \frac{\partial G_1(v, \om,  \La)} {\partial g(0)}{|(v^+(\om),\om)}+
  \underbrace{\frac{\partial G_1(v, \om, \La)}{\partial v (0)}{|(v^+(\om), \om)}}_{=0}\frac{\partial v^+(0,\cdot)}{\partial g(0)} | \BB(0)\right ] \cr &
  - \E \left [  \frac{\partial G_1(v, \om, \La)} {\partial g(0)}{|(v^-(\om),\om)} +
  \underbrace{\frac{\partial G_1(v, \om, \La)}{\partial v (0)}{|(v^-(0), \om)}}_{=0}\frac{\partial v^- (0, \cdot )}{\partial g(0)} | \BB(0)\right ] \cr & =
  - \theta  \E \left [ \int_{ Q_1(0)} v^+ (x, \om) dx  | \BB(0) \right ] + \theta  \E \left [ \int_{ Q_1(0)} v^- (x, \om) dx  | \BB(0) \right ]
  \end {split} \end {equation}
  where the  terms are   zero as $v^\pm$  are  minimizers.
 The last equality is obtained since 
 $$ \frac{\partial G_1(v, \om)}{\partial g(0)}{|(v^\pm(\om),\om)} =- \theta   v^\pm(x,\om). $$
  Unfortunately, $v^\pm$ is not differentiable in the random field everywhere. Lipschitz-continuity in 
the field would be sufficient, but this is difficult to derive from the Euler-Lagrange equation because of the lack of convexity of the associated functional. A rigorous proof follows from \ref{A2b} after taking conditional expectations.
 Further, by  Theorem \ref {infvol},  we have
 \begin{equation} \label{m90} \begin {split} &  \E \left [  \frac {\partial} {\partial g(0)}  \E \left [ F_n| \BB(0)\right ]  \right ] =  -  \theta \E \left [  \E \left [ \int_{ Q_1(0)} v^+ (x, \om) dx  | \BB(0) \right ]  \right ]  \cr & +  \theta  \E \left [  \E \left [ \int_{ Q_1(0)} v^- (x, \om) dx  | \BB(0) \right ]\right ]  =  \theta [- m^+ + m^-]= -2 \theta m^+.
 \end {split}
 \end{equation} 
\end {proof} 
 
  \begin {lem}  \label {d3} If 
\begin{equation} \label{d3a}  \E \left [ \left (  \E \left [ F_n| \BB(0)\right ]\right) ^2\right ] =0 \end {equation} 
 then    $m^+=m^-=0$, see for the definition    \eqref {m1}. 
\end {lem}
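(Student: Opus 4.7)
The plan is to read off the conclusion from Lemma \ref{A2c} after observing that the hypothesis forces the function $g(0)\mapsto \E[F_n|\BB(0)]$ to be identically zero as a function of $g(0)$, so its a.e.\ derivative must vanish. Since $\E[F_n|\BB(0)]$ is $\BB(0)$-measurable, it has the form $h(g(0))$ for some measurable $h$ on $[-1,1]$. The hypothesis $\E\bigl[(\E[F_n|\BB(0)])^2\bigr]=0$ says $h(g(0))=0$ a.s., hence $h=0$ Lebesgue almost everywhere on the support of the (absolutely continuous) law of $g(0)$.

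The key point is to promote ``$h=0$ a.e.'' to ``$h'=0$ a.e.''. For this I would exploit regularity of $h$ coming from Lemma \ref{A2} and Remark \ref{R1}. By Remark \ref{R1}, $s\mapsto \int_{Q_1(0)} v^+(x,(s,\om^{(0)}))\,dx$ is nondecreasing for every fixed $\om^{(0)}$, and by the independence of $g(0)$ from $\{g(z):z\neq 0\}$, taking the conditional expectation $\E[\,\cdot\,|\BB(0)]$ amounts to integrating over $\om^{(0)}$, preserving monotonicity. The same holds for $v^-$. Integrating the derivative identity of Corollary \ref{A2b} in $g(0)$ shows that $h$ is the difference of two bounded nondecreasing functions of $g(0)$, hence of bounded variation, hence continuous outside a countable set. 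Therefore $h=0$ on a set of full Lebesgue measure actually forces $h\equiv 0$ outside that countable set, and so its a.e.-derivative (which exists by Lemma \ref{A2c}) vanishes a.e.

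Having $h'\equiv 0$ a.e., I would plug into the second formula in Lemma \ref{A2c}:
\[
0=\E\!\left[\frac{\partial}{\partial g(0)} \E[F_n|\BB(0)]\right] = -2\theta\, m^+.
\]
Since $\theta>0$, this gives $m^+=0$, and then $m^-=-m^+=0$ by \eqref{M1a} in Theorem \ref{infvol}, which is exactly the conclusion of the lemma.

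The main (only) obstacle is the first step above, namely passing from ``$h=0$ almost surely'' to ``$h'=0$ almost everywhere''. For a generic measurable function this implication is false, so one must invoke the BV structure of $h$ inherited from the monotonicity in Remark \ref{R1} together with the absolute continuity of the law of $g(0)$. Everything else is a direct application of Lemma \ref{A2c} and of $m^+=-m^-$ from Theorem \ref{infvol}.
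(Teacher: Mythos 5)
Your proposal is correct and follows essentially the same route as the paper: the paper also identifies $\E[F_n|\BB(0)]$ with a function $f$ of $g(0)$, uses Lemma \ref{A2c} and the bound \eqref{eq:bound} to get enough regularity (the paper uses $0\le f'\le(1+C_0\theta)\theta$, i.e.\ monotonicity, where you use the BV structure from Remark \ref{R1}) to pass from $f=0$ a.e.\ to $f'=0$ a.e., and then concludes $m^+=m^-=0$ from $\E[f']=-2\theta m^+$. The difference between your BV argument and the paper's monotonicity argument is immaterial.
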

   \begin {proof} 
Denote  $f (\om (0)):=  \E \left [- F_n| \BB(0)\right ] $.   
Set $s= \om (0)$, \eqref {d3a}    can be written as 
 $ \int f^2(s) \Pr(ds) =0.$   By Lemma  \ref {A2c} and by bound  \eqref {eq:bound} in Theorem    \ref {infvol}  we have that $(1+C_0\theta)\theta  \ge f'(s) \ge 0$ almost
 everywhere.  
 This implies that 
 $f(s)=0$ for $\Pr$ almost all point of continuity of the distribution $g(0)$.
If $f(s)=0 $  for $\Pr$ almost all point of continuity of the distribution $g$, then $ f'(s)=0$  for  $\Pr$ almost all point of continuity of the distribution $g(0)$.   But if  $ f'(s)=0$  then  from Lemma \ref {A2c} we get $m^+=m^-=0$.   
\end {proof}

\vskip0.5cm 
\noindent  { \bf Proof of Theorem \ref {min} }  

By   Lemma \ref {A1}  there exists $C=C(C_0, \theta)>0 $ so that 
\begin{equation} \label{m3a}\left |  G_1(v^+ , \om,\La) 
- G_1(v^-, \om, \La)\right | \le C | \Lambda|^{\frac  {d-1} d}. 
\end{equation}    
Applying  Theorem \ref {A3} we get the following lower bound on the Laplace transform of 
$F_n(\om)$ defined in Definition \ref {def1}:
 \begin{equation} \label{mars1}
 \liminf_{n \to \infty} \E  \left [ e^{t  \frac {F_n} {\sqrt { \La_n}} } \right ] \ge e^{\frac {t^2 D^2}  2}  \end{equation}   
 where we denote, see \eqref {MM2}  by
 $$D^2 =  \E \left [ \left ( \E   \left [ F_n | \BB(0)\right ] \right )^2 \right ] .$$
It is immediate to realize that \eqref {m3a}  and \eqref {mars1} contradict each other in $ d \le 2$ unless $D^2=0$.  On the other hand  when  $D^2=0$,  Lemma \ref {d3} implies 
\begin{equation}\label{integralsequal}
m^+= -m^-=    
\E \left [  \int_{   [-\frac 12, \frac 12]^d}   v^\pm(x, \cdot )  {\rm d} x \right ] =0. 
\end{equation} 
Now \eqref{diseq1} implies that $P$-a.s. 
$v^+(x,\omega)\ge v^-(x,\omega)$ for all $x\in R^2.$ This and 
\eqref{integralsequal} imply that $v^+(x,\omega)=v^-(x,\omega)$ a.s.

For each $n$ and for each $\om \in \Om$, by lower semicontinuity and 
coerciveness of the  functional $ G_1$,  there exists at least one minimizer   
$ w_n (\cdot,\omega)$ in  
$   \min_{w  \in   H^1(\La_n) }G_1 (w,\omega,\Lambda_n)$. 
By \eqref{diseq1} and the fact that $v^+=v^-,$ the functions  
$w_n$ converge pointwise
to a limit $u^*(x,\omega),$ and for the limit we have 
$u^*(x,\omega)=v^+(x,\omega)=v^-(x,\omega).$
The properties of the minimizer stated in \ref{min} therefore follow from
the corresponding properties of $v^\pm,$ see Theorem \ref{infvol}.


    \qed

  \section {Appendix}

The  main tool to prove Lemma  \ref {A3} is the  following general result
which we reported from  \cite {HH}, see    Theorem 3.2 and Corollary 3.1  of \cite {HH}] .  The correspondence to the previous notation is
 $    \frac {F_n} {\sqrt { |\La_n \cap  \Z^d|} } \leftrightarrow  S_n $,  $\frac {Y_{n,i} }  {\sqrt { |\La_n \cap  \Z^d|} } \leftrightarrow   X_{n,i}$ and $\BB_{n,i}  \leftrightarrow  \FF_{n,i} $, 
see  \eqref  {m4}, \eqref {m91}. 
  \begin {thm}   Let $S_{n,i}$,  $i=1, \dots k_n$ be  a double array of   zero  mean martingales  with respect to the filtration  $\FF_{n,i}$, $ \FF_{n,i} \subset \FF_{n+1,i}$ $i=1, \dots k_n$ with $S_{n,k_n}= S_n$,   so that $S_{n,i}= \E[ S_n| \FF_{n,i}] $.   It is assumed that $k_n \uparrow \infty$  as $n \uparrow \infty$. 
Denote 
$$ X_{n,i}:= S_{n,i}- S_{n,i-1}, $$
$$ V_{n}= \sum_{i=1}^{k_n} \E [  X^2_{n,i}| \FF_{n,i-1}],  $$
 $$ U_{n,a}= \sum_{i=1}^{k_n} \E [  X^2_{n,i} \1_{\{ |[  X^2_{n,i}| >a\}} |  \FF_{n,i-1}]. $$
 Suppose that
 \begin {itemize}
 \item  for some constant $b^2$ and for  all  $\delta>0$,  $\lim_{n \to \infty} \Pr [ |V_n-b^2| \ge \delta] =0 $, 
 \item  For any $a>0$ for any $ \delta >0$ 
   $$ \lim_{n \to \infty} \Pr \left [ U_n(a) \ge \delta \right] =0, $$
    (Lindeberg condition)
 \end {itemize}
 then in distribution 
  $$ \lim_{n \to \infty}  S_n \stackrel {D} {=}  Z,$$
  where $Z$ is a random gaussian variable with mean equal to zero and variance equal to $b^2$. 
\end {thm}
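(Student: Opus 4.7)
\bigskip
\noindent \textbf{Proof proposal for the Martingale CLT (Theorem in the Appendix).}

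\medskip

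The plan is to prove convergence $\E[e^{itS_n}] \to e^{-t^2 b^2/2}$ for every $t\in\R$, which via L\'evy's continuity theorem gives $S_n\stackrel{D}{\to}Z\sim N(0,b^2)$. The argument proceeds in three stages: a truncation step that reduces the problem to uniformly small martingale differences, a telescoping identity for the characteristic function that exploits the martingale property, and a final passage to the limit using the convergence of $V_n$.

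\medskip
\noindent \textbf{Step 1 (Truncation via Lindeberg).} First I would fix $t$ and choose a sequence $\eta_n \downarrow 0$ slowly enough that the Lindeberg condition still forces $\sum_{i} \E[X_{n,i}^2 \1_{\{|X_{n,i}|>\eta_n\}}\mid\FF_{n,i-1}]\to 0$ in probability. Define the truncated and recentered increments
\[
\widehat X_{n,i} \;:=\; X_{n,i}\1_{\{|X_{n,i}|\le \eta_n\}}\;-\;\E\!\left[X_{n,i}\1_{\{|X_{n,i}|\le \eta_n\}}\mid \FF_{n,i-1}\right],
\]
which form a bounded martingale difference array with $|\widehat X_{n,i}|\le 2\eta_n$. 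Using the martingale property $\E[X_{n,i}\mid\FF_{n,i-1}]=0$, the recentering subtracts $\E[X_{n,i}\1_{\{|X_{n,i}|> \eta_n\}}\mid\FF_{n,i-1}]$, whose sum has $L^1$-norm bounded by $\eta_n^{-1}\E[U_n(\eta_n^2)]\to 0$ by the Lindeberg hypothesis after a careful choice of $\eta_n$. Analogous estimates show that $\widehat V_n:=\sum_i \E[\widehat X_{n,i}^2\mid\FF_{n,i-1}]$ still converges to $b^2$ in probability and that $\widehat S_n - S_n \to 0$ in probability. Thus it suffices to prove the CLT for $\widehat S_n$.

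\medskip
\noindent \textbf{Step 2 (Telescoping the characteristic function).} For the bounded array, set $\sigma_{n,j}^2:=\E[\widehat X_{n,j}^2\mid\FF_{n,j-1}]$ and use the telescoping decomposition
\[
e^{it\widehat S_n}\;-\;e^{-t^2\widehat V_n/2} \;=\;\sum_{j=1}^{k_n} e^{it\widehat S_{n,j-1}}\,e^{-t^2(\widehat V_n-\widehat V_{n,j-1})/2}\Bigl(e^{it\widehat X_{n,j}}\,e^{t^2\sigma_{n,j}^2/2}-1\Bigr).
\]
For each $j$, the Taylor expansion
\[
e^{it\widehat X_{n,j}}e^{t^2\sigma_{n,j}^2/2}\;=\;1+it\widehat X_{n,j}+\tfrac{t^2}{2}(\sigma_{n,j}^2-\widehat X_{n,j}^2)+R_{n,j},
\]
with $|R_{n,j}|\le C(t)(|\widehat X_{n,j}|^3+\sigma_{n,j}^4)\le C(t)\eta_n(\widehat X_{n,j}^2+\sigma_{n,j}^2)$, will be inserted into the sum. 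Taking $\E[\cdot\mid\FF_{n,j-1}]$ of the bracket kills the linear term $it\widehat X_{n,j}$ by the martingale property and cancels the second-order term exactly by the definition of $\sigma_{n,j}^2$; only the remainder survives. After one more conditioning (to handle the factor $e^{-t^2(\widehat V_n-\widehat V_{n,j-1})/2}$, which is not $\FF_{n,j-1}$-measurable, I would either process the telescope in the reverse order so that the exponential future factor is absorbed, or use the standard Hall--Heyde trick of exchanging the role of past and future sums) one arrives at
\[
\bigl|\E[e^{it\widehat S_n}]-\E[e^{-t^2\widehat V_n/2}]\bigr|\;\le\; C(t)\,\eta_n\,\E[\widehat V_n]\;\longrightarrow\;0,
\]
since $\E[\widehat V_n]$ is bounded uniformly (it converges to $b^2$).

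\medskip
\noindent \textbf{Step 3 (Passing to the limit).} Because $\widehat V_n\to b^2$ in probability and $0\le e^{-t^2\widehat V_n/2}\le 1$, bounded convergence yields $\E[e^{-t^2\widehat V_n/2}]\to e^{-t^2 b^2/2}$. Combined with Step~2 this gives $\E[e^{it\widehat S_n}]\to e^{-t^2 b^2/2}$, and Step~1 transfers the conclusion to $S_n$. The hardest technical point, and the place where the argument would demand the most care, is the bookkeeping in the telescoping identity of Step~2: the exponential factor $e^{-t^2(\widehat V_n-\widehat V_{n,j-1})/2}$ is only $\FF_{n,k_n-1}$-measurable, so iterated conditioning must be organized so that, at every application, one of the three factors in the bracket becomes $\FF_{n,j-1}$-measurable. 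Handling this while keeping the Taylor remainders summable---this is precisely where the uniform smallness produced by the Lindeberg condition (Step~1) is essential---is the main obstacle.
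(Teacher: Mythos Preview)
The paper does not prove this theorem at all: it is stated in the Appendix purely as a quotation of Theorem~3.2 and Corollary~3.1 from Hall--Heyde \cite{HH}, with no argument given. So there is no ``paper's own proof'' to compare against; you have supplied what the authors deliberately outsourced.

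That said, your sketch is essentially the standard route to the martingale CLT and is in the spirit of the Hall--Heyde proof. Two remarks. First, the actual argument in \cite{HH} (following McLeish) organises Step~2 slightly differently: rather than the telescoping sum you wrote, one works with the product $T_n=\prod_j(1+it\widehat X_{n,j})$, for which $\E[T_n]=1$ by iterated conditioning, and then uses the elementary approximation $e^{ix}\approx(1+ix)e^{-x^2/2}$ to compare $e^{it\widehat S_n}$ with $T_n\,e^{-t^2\sum_j\widehat X_{n,j}^2/2}$. This sidesteps exactly the measurability obstacle you flagged, because the product $T_n$ carries no ``future'' factor. Your telescoping identity is algebraically correct, but the factor $e^{-t^2(\widehat V_n-\widehat V_{n,j-1})/2}$ is genuinely not $\FF_{n,j-1}$-measurable, and neither ``processing in reverse order'' nor a vague appeal to a trick will make it so; the clean fix is the product reformulation just described. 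Second, in Step~1 you need, beyond $V_n\to b^2$, that the \emph{realised} quadratic variation $\sum_j\widehat X_{n,j}^2$ also converges to $b^2$ in probability; this follows from $V_n\to b^2$ together with the Lindeberg condition, but it is an extra lemma, not automatic, and it is what makes the final bounded-convergence step go through.
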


\end{document}